\newcommand{\N}{{\mathbb N}}
\newcommand{\Z}{{\mathbb Z}}
\newcommand{\R}{{\mathbb R}}
\newcommand{\C}{{\mathbb C}}
\newcommand{\Q}{{\mathbb Q}}
\newcommand{\fdem}{\hfill $\square$}
\theoremstyle{plain}
\newtheorem{teo}{Theorem}[section]
\newtheorem{lema}[teo]{Lemma}
\newtheorem{cor}[teo]{Corollary}
\newtheorem{prop}[teo]{Proposition}
\theoremstyle{definition}
\newtheorem{defi}{Definition}[section]
\theoremstyle{remark}
\newtheorem{obs}{Remark}[section]
\def\noi{\noindent}
\begin{document}

\nocite{*}

\title{Adelic solenoid II: Ahlfors-Bers theory}

\author{Juan M. Burgos, Alberto Verjovsky \thanks{The second author (AV) benefited from a PAPIIT (DGAPA, Universidad Nacional Aut\'onoma de M\'exico) grant IN106817.}}

\maketitle

\noi {\bf 2010 Mathematics Subject Classification.} Primary: 11R56, 32G05, 32G15, 57R30.
\noi Secondary: 22Bxx, 37J40.

\noindent {\bf Key Words:} Adelic, Solenoid, Beltrami, Ahlfors-Bers, Teichm\"uller, Diophantine.

\begin{abstract}
\noi We generalize the Ahlfors-Bers theory to the adelic Riemann sphere. In particular, after defining the appropriate notion of a Beltrami differential in the solenoidal context, we give a sufficient condition on it such that the corresponding Beltrami equation has a quasiconformal homeomorphism solution; i.e. The Ahlfors-Bers Theorem in the solenoidal case. This additional condition on the solenoidal Beltrami differentials can be written as a Banach norm in a subspace of solenoidal differentials. Moreover, this subspace is the completion under this norm of those solenoidal differentials locally constant at the fiber. As a toy example, we show how this technique works on a linear problem: We generalize the diophantine equation complex analytic extension problem to the respective solenoidal space.
\end{abstract}

\section{Introduction}

In the last thirty years, dynamical systems, differential equations on ultrametric spaces (p-adics, Berkovich spaces, ad\`eles) and their  applications have been studied intensively, see e.g. \cite{1}, \cite{2}, \cite{3}, \cite{4}, \cite{5}, \cite{6}, \cite{7}, \cite{8}, \cite{9}, \cite{10}, \cite{11} and the references therein.

In this paper we study the Beltrami differential equation on the adelic Riemann sphere. It is the inverse limit of the inverse system of coverings $z\mapsto z^{n}$ on the Riemann sphere ramifified at $0$ and $\infty$. Topologically, it is the suspension of the adelic solenoid. Removing the cusps, this is a solenoidal laminated object with nontrivial monodromy, fibering over the punctured plane whose typical fiber is the ring of adelic integers.

Ahlfors-Bers theory is the complex analytic theory of the Teichm\"uller space \cite{Ahlfors},\cite{Hubbard},\cite{Imayoshi}. This is the space of equivalence classes of complex structures on a Riemann surface where two structures are equivalent if they are related by a homeomoprhism homotopic to the identity. In Ahlfors-Bers theory, complex structures are obtained by deformations of a prescribed structure (a base point in Teichm\"uller space) by quasiconformal homeomorphisms. After uniformization of the Riemann surface on the Poincar\'e disk $\Delta$, these maps are obtained by a Beltrami coefficient $\mu$ in the unit ball $L_\infty(\Delta)_1$ and a normalization condition through the Beltrami equation:
$$\partial_{\bar{z}}f= \mu\partial_z f$$

At the heart of the theory lies the Ahlfors-Bers Theorem which states that the Beltrami equation admits a quasiconformal homeomorphism solution. Considering that the Beltrami equation is distributional and the coefficient is just an $L_\infty$ measurable class, this is a striking result. In particular by considering the Beltrami coefficients, this gives a complex analytic model for the universal Teichm\"uller space:
$$T(1):= L_\infty(\Delta)_1/\sim$$
where two coefficients are equivalent if the quasiconformal solutions of the respective Beltrami equations coincide on the the boundary of the disk.

Although we described the Beltrami equation in the context of Teichm\"uller theory, actually the coefficient lives in $L_\infty(\C)_1$ and the solution is a quasiconformal homeomorphism of the Riemann sphere $\C P^{1}$. Here we generalize the Ahlfors-Bers Theorem to the adelic Riemann sphere. For this purpose we must develop the appropriate notions of solenoidal Beltrami differential, solenoidal Beltrami equation and solenoidal quasiconformal solution.

However, to assure the existence of a solution, we will need an additional sufficient condition on the solenoidal Beltrami differentials. This condition can be interpreted as a finer Banach norm on a subspace of solenoidal differentials. This subspace is the completion under the new norm of those solenoidal differentials locally constant at the fiber. In particular, we have a solenoidal Ahlfors-Theorem for those new solenoidal Beltrami differentials living strictly in the completion.

After establishing this result, the construction of the corresponding complex analytic Teichm\"uller space is completely straightforward and we do not even mention it in the text. However, it is worth to point out that the real version of it has been considered before in dynamical systems theory: In  \cite{Universalities} (see also \cite{Su2}, section $9$), D. Sullivan studies the linking between universalities of Milnor-Thurston, Feigenbaum's (quantitative) and Ahlfors-Bers. As he points out in his second example, the dynamical suspension $L$ of the square map from the diadic solenoid $S^{1}_{2}$ to itself is the basic solenoidal surface required in the dynamical theory of Feigenbaum's Universality \cite{Feigenbaum}. This object can be seen as a quotient of $\Delta^{*}_\Q$, the inverse limit of the punctured disk by the inverse system of coverings considered before. This is a two dimensional solenoid with hyperbolic leaves and so is $L$. In that paper (see also \cite{Su2}, appendix), it is developed the real Teichm\"uller theory of a two dimensional solenoid and then it is applied to the dynamical solenoid $L$. We can say that the present paper develops the complex analytic theory for the Teichm\"uller space of $L$.

As a toy example and prelude to the solenoidal Beltrami equation, we solve a linear problem: We generalize the diophantine equation complex analytic extension problem in the space $\C^{n}/\Z^{n}\cong (\C^{*})^{n}$ to the solenoidal space $(\C_\Q^{*})^{n}$ where $\C_\Q^{*}$ is the algebraic solenoid. We solve the problem by imposing a finer Banach norm on a subspace of the space of complex analytic functions where the non homogeneous term of the equation lives.


\section{Toy example: The diophantine equation}

\subsection{Introduction}

After reading part one, the reader may have had the impression that adapting the usual concepts and results to the solenoidal case is an always possible and straightforward procedure. Even he might have guessed a sort of metaprinciple of writing the rational numbers $\Q$ instead of the integers $\Z$ in order to translate the statements to the solenoidal case.

The purpose of this section is dual. The first is to show that the previous thought is false in the case of differential equations. We will generalize the problem of extending complex analitically a solution of a diophantine equation in the space $\C^{n}/\Z^{n}\cong (\C^{*})^{n}$ to the solenoidal space $(\C_\Q^{*})^{n}$. Here $\C^{*}_\Q$ is the inverse limit of the inverse system of coverings $z\mapsto z^{n}$ on the punctured plane $\C^{*}$. Topologically it is homeomorphic to the adelic solenoid times the real line.

Instead of working on the solenoid we will do it over its baseleaf for it is equivalent and formally easier. The extension problem for diphantine equations is the following: Given a zero average $2\pi$--periodic (respect to the real variables) complex analytic function $g$ in $\mathcal{C}_\rho$ and a vector $\omega$ in $\R^{n}$, find a complex analytic solution $f$ possibly with smaller domain in $\mathcal{C}_{\rho-\delta}$ with $\delta>0$, of the diophantine equation $D f(\omega)= g$. Here $\mathcal{C}_\rho$ denotes the space of complex analytic $2\pi$--periodic functions whose supremum norm over $||Im(z)||<\rho$ with $z\in\C^{n}$ is finite. This problem will not have solution for general $\omega$ due to the \textsf{small divisor problem}: Translating the equation into Fourier space, the Fourier coefficients will verify the following relation:
$$f_k:= \frac{g_k}{2\pi i(\omega\cdot k)}, \quad k\in \Z^{n}, \quad k\neq 0$$
The problem is that, for general $\omega$, the divisor could get arbitrarily small or even zero. However, there is a special class of vectors $\omega$ for which this problem can be controlled giving a convergent Fourier series solution on some domain. These are the \textsf{diophantine vectors} and verify the following estimate:
$$|\omega\cdot k|>\frac{\gamma}{|k|^{n}}, \quad k\in \Z^{n}, \quad k\neq 0$$
for some positive constant $\gamma$. This condition is the generalization of diophantine irrationals of degree three. The set of diophantine vectors is of full measure in $\R^{n}$.

We generalize this problem to the solenoidal case. Now we have Pontryagin series instead of Fourier and the modes are rational nuebers. Seeking for a solution, note that invoking the metaprinciple mentioned at the beginning, there is no vector $\omega$ verifying the diophantine condition if we consider $\Q$ instead of $\Z$.

Now, the space to consider is $\mathcal{C}_{\rho, \Q}$: This is the space of complex analytic limit periodic functions whose supremum norm over $||Im(z)||<\rho$ with $z\in\C^{n}$ is finite. We will solve the problem by imposing a finer Banach norm on a subspace of $\mathcal{C}_\rho$ where the function $g$ lives. However, the space $\mathcal{C}_{\rho-\delta}$ where the solution lives has the original supremum norm.

The proof of the KAM Theorem at least for the simplest case of an analytic nondegenerate Hamiltionian in \cite{Benettin} and \cite{Hubbard2}, goes through an iterative process solving a diophantine equation in each step. Because of the fact that the new norm is finer than the original supremum one, this iterative process cannot be applied directly to the solenoidal case and unfortunately we do not have a solenoidal KAM Theorem following this procedure.

The second purpose of this section is to be a prelude for the solenoidal Beltrami equation. In constrast to the diophantine equation, the solenoidal Beltrami equation is nonlinear and the solution will be a leaf preserving homeomorphism from the adelic Riemann sphere to itself instead of a complex valued function. This will be a much harder problem and the techniques involved will be quite different. However, they have similarities: Although the general case admits no solution, imposing an additional condition on the input data (the Beltrami differential in this case) will guarantee a solution. This additional condition is in escence the same as the one considered here imposing the new norm and they look very similar indeed.

\subsection{Diophantine equation}

Consider the following supremum norm on the space of holomorphic functions on $\C^{n}$:
$$||f||_\rho:= \sup_{||Im(z)||<\rho} |f(z)|$$
Define the space $\mathcal{C}_{\rho, L}$ of $2\pi L$--periodic holomorphic functions $f$ such that $||f||_{\rho}<\infty$. The space $\mathcal{C}_{\rho, L}$ is a Banach space with the supremum norm $||\cdot||_\rho$. Define the space $\mathcal{C}_{\rho, \Q}$ verbatim as before with limit periodic functions instead of periodic. Again, this space is a Banach space under the supremum norm.

Define the set $\Omega_\gamma\subset \R^{n}$ of \textsf{diophantine vectors} $\omega$ respect to some positive constant $\gamma$ verifying the property:
$$|\omega\cdot k|>\frac{\gamma}{|k|^{n}}$$
for every $k\in \Z^{n}-\{0\}$. The union $\bigcup_{\gamma>0} \Omega_\gamma$ is of full measure (\cite{Hubbard2}, Proposition 5.3). We have the following Theorem (\cite{Hubbard2}, Proposition 7.3):

\begin{teo}\label{Preliminary_bound}
Consider a zero average (respect to the real variables) function $g\in \mathcal{C}_{\rho, 1}$ and a diophantine vector $\omega\in \Omega_{\gamma}$ for some positive constant $\gamma$. Then, for every $\delta>0$ there is a solution $f\in \mathcal{C}_{\rho-\delta,1}$ of the corresponding diophantine equation such that:
$$||f||_{\rho-\delta}\leq \frac{k_n}{\gamma\delta^{2n}} ||g||_\rho\quad and \quad||Df||_{\rho-\delta}\leq \frac{k_n}{\gamma\delta^{2n+1}} ||g||_\rho $$
\end{teo}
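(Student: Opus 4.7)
The plan is to diagonalize the differential operator $D(\cdot)(\omega) = \partial_\omega$ by passing to Fourier series and then bound tail sums using the diophantine inequality together with the exponential decay of Fourier coefficients of holomorphic functions on a strip. Since $g \in \mathcal{C}_{\rho,1}$ is $2\pi$-periodic, zero-average, and holomorphic on $\{|\text{Im}(z)| < \rho\}$, I would first expand
$$g(z) = \sum_{k \in \Z^n \setminus \{0\}} g_k \, e^{i k\cdot z},$$
and obtain the Paley--Wiener bound $|g_k| \leq \|g\|_\rho \, e^{-\rho |k|}$ by shifting the integration contour defining $g_k$ into the strip by a displacement of magnitude approaching $\rho$ in the direction of $-\text{sgn}(k)$ componentwise. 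Then, formally, the equation $Df(\omega) = g$ forces the Fourier coefficients of $f$ to be $f_k = g_k/(i\,\omega\cdot k)$ for $k \neq 0$, and $f_0$ arbitrary (I would set $f_0 = 0$). The diophantine hypothesis $|\omega\cdot k| > \gamma/|k|^n$ then gives
$$|f_k| \leq \tfrac{1}{\gamma}|k|^n \|g\|_\rho \, e^{-\rho |k|}.$$

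Next I would define $f$ by its Fourier series and show it converges uniformly on $\{|\text{Im}(z)| \leq \rho - \delta\}$ by the termwise bound
$$|f_k\, e^{i k \cdot z}| \leq \tfrac{1}{\gamma}\|g\|_\rho \, |k|^n\, e^{-\delta |k|},$$
so that
$$\|f\|_{\rho-\delta} \;\leq\; \tfrac{\|g\|_\rho}{\gamma} \sum_{k \in \Z^n \setminus \{0\}} |k|^n \, e^{-\delta |k|}.$$
The sum is estimated by comparison with the integral $\int_{\R^n} |x|^n\, e^{-\delta |x|}\, dx$, which after the change of variables $x \mapsto x/\delta$ scales as $\delta^{-2n}$ times a dimensional constant $k_n$ depending only on $n$ (essentially a Gamma function value and the volume of the unit sphere). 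This yields the first bound. For the derivative bound I would differentiate termwise, noting that each component of $Df$ picks up an extra factor of $i k_j$, producing an additional $|k|$ in the summand, and the same integral estimate with one more power of $|k|$ gives the factor $\delta^{-(2n+1)}$.

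The main technical obstacle is organizing the Fourier-coefficient bound cleanly: the shift-of-contour argument for $g_k$ must be done directionally (choosing the imaginary shift opposite in sign to each coordinate of $k$) so that $|e^{i k \cdot (x + iy)}| = e^{-k \cdot y}$ is made as small as possible, and at the boundary one loses an arbitrarily small $\varepsilon$ which must be sent to zero to recover the exponent $e^{-\rho |k|}$ (up to replacing $|k|$ by an equivalent norm, absorbed into $k_n$). Once this is in place, convergence of $f$ and $Df$ in $\mathcal{C}_{\rho-\delta,1}$ is immediate from dominated convergence, and the final bounds follow from the sum estimate above. Periodicity and holomorphy in the smaller strip are preserved because each term of the series is entire, periodic, and the series converges uniformly on compact subsets of the open strip $\{|\text{Im}(z)| < \rho - \delta\}$.
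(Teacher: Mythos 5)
The paper itself does not prove this theorem --- it is quoted directly from Hubbard--Ilyashenko (\cite{Hubbard2}, Proposition 7.3) --- and your argument is precisely the standard proof of that result: diagonalize $\partial_\omega$ in Fourier series, combine the contour-shift (Paley--Wiener) decay $|g_k|\leq ||g||_\rho e^{-\rho|k|}$ with the diophantine lower bound on $|\omega\cdot k|$ to get $|f_k|\leq \gamma^{-1}|k|^{n}||g||_\rho e^{-\rho|k|}$, and estimate $\sum_{k\neq 0}|k|^{n}e^{-\delta|k|}$ by the integral $\sim k_n\delta^{-2n}$, with one extra power of $|k|$ (hence of $\delta^{-1}$) for $Df$. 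The proof is correct, including the directional choice of the imaginary shift in the coefficient bound and the consistent bookkeeping of equivalent norms on $k$, which only affect the dimensional constant $k_n$.
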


We want to generalize the previous Theorem to the space $\mathcal{C}_{\rho, \Q}$; i.e. The solenoid context. For this purpose, we need the rescale version of it:
\begin{lema}\label{rescale}
Consider a zero average (respect to the real variables) $g\in \mathcal{C}_{\rho, L}$ and a diophantine vector $\omega\in \Omega_{\gamma}$ for some positive constant $\gamma$. Then, for every $\delta>0$ there is a solution $f\in \mathcal{C}_{\rho-\delta, L}$ of the corresponding diophantine equation such that:
$$||f||_{\rho-\delta}\leq \frac{k_n L^{2n+1}}{\gamma\delta^{2n}} ||g||_\rho\quad and \quad ||Df||_{\rho-\delta}\leq \frac{k_n L^{2n+1}}{\gamma\delta^{2n+1}} ||g||_\rho$$
\end{lema}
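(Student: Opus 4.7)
The plan is to reduce Lemma \ref{rescale} to Theorem \ref{Preliminary_bound} by a linear change of variables that rescales $2\pi L$--periodicity to $2\pi$--periodicity, and then chase the norms carefully.

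\medskip

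\noindent\textbf{Step 1: Rescaling.} Given $g\in\mathcal{C}_{\rho,L}$, define $\tilde g(w):= g(Lw)$. Then $\tilde g$ is $2\pi$--periodic, holomorphic on $\C^{n}$, still has zero average with respect to the real variables, and the strip $\|\mathrm{Im}(w)\|<\rho/L$ is exactly the preimage under $z=Lw$ of $\|\mathrm{Im}(z)\|<\rho$, so
\[
\tilde g\in\mathcal{C}_{\rho/L,1},\qquad \|\tilde g\|_{\rho/L}=\|g\|_\rho.
\]

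\noindent\textbf{Step 2: Invoke Theorem \ref{Preliminary_bound}.} The diophantine vector $\omega\in\Omega_\gamma$ does not change, and the condition $|\omega\cdot k|>\gamma/|k|^{n}$ is intrinsic to $\omega$, not to the period. Apply Theorem \ref{Preliminary_bean}... i.e.\ Theorem \ref{Preliminary_bound} to $\tilde g$, with outer radius $\rho/L$ and shrinkage $\delta/L$. This yields $\tilde h\in\mathcal{C}_{(\rho-\delta)/L,\,1}$ with $\partial_\omega \tilde h=\tilde g$ and
\[
\|\tilde h\|_{(\rho-\delta)/L}\le \frac{k_n}{\gamma(\delta/L)^{2n}}\|\tilde g\|_{\rho/L},\qquad
\|D\tilde h\|_{(\rho-\delta)/L}\le \frac{k_n}{\gamma(\delta/L)^{2n+1}}\|\tilde g\|_{\rho/L}.
\]

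\noindent\textbf{Step 3: Undo the rescaling.} Set $f(z):=L\,\tilde h(z/L)$. A direct computation shows $f$ is $2\pi L$--periodic (since $\tilde h$ is $2\pi$--periodic) and
\[
\partial_\omega f(z)=L\cdot\frac{1}{L}\,(\partial_\omega \tilde h)(z/L)=\tilde g(z/L)=g(z),
\]
so $f$ solves the desired diophantine equation. On norms, $\|f\|_{\rho-\delta}=L\,\|\tilde h\|_{(\rho-\delta)/L}$ whereas $Df(z)=(D\tilde h)(z/L)$ gives $\|Df\|_{\rho-\delta}=\|D\tilde h\|_{(\rho-\delta)/L}$. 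Substituting the bounds of Step 2 and using $\|\tilde g\|_{\rho/L}=\|g\|_\rho$ produces
\[
\|f\|_{\rho-\delta}\le \frac{k_n L^{2n+1}}{\gamma\,\delta^{2n}}\|g\|_\rho,\qquad
\|Df\|_{\rho-\delta}\le \frac{k_n L^{2n+1}}{\gamma\,\delta^{2n+1}}\|g\|_\rho,
\]
which are precisely the claimed estimates.

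\medskip

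\noindent The argument is essentially bookkeeping; the only place to be cautious is the tracking of $L$--powers, where each of the three factors ($L^{2n}$ from $(\delta/L)^{-2n}$, an extra $L$ from the prefactor in $f=L\tilde h(\cdot/L)$, and the fact that no extra $L$ appears in the $Df$ estimate because differentiation brings down a compensating $1/L$) must line up. With the scaling in Step 3 this works out cleanly and yields the factor $L^{2n+1}$ in both bounds.
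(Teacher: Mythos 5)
Your proof is correct and follows essentially the same rescaling reduction to Theorem \ref{Preliminary_bound} as the paper; the only (immaterial) bookkeeping difference is that you keep $\omega\in\Omega_\gamma$ fixed and place the compensating factor $L$ in the amplitude of $f=L\,\tilde h(\cdot/L)$, whereas the paper rescales the vector to $L^{-1}\omega\in\Omega_{L^{-1}\gamma}$ and sets $f=f_L\circ h_{L^{-1}}$ with no prefactor. Both normalizations produce the same factor $L^{2n+1}$ in the two estimates.
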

\begin{proof}
Consider the homothety $h_a$ in $\C^{n}$ such that $h_L(z_1.\ldots z_n):= a (z_1,\ldots z_n)$. Now, the function $g\circ h_L$ belongs to $\mathcal{C}_{L^{-1}\rho,1}$ and the new diophantine equation has diophantine vector $\tilde{\omega}= L^{-1}\omega\in \Omega_{L^{-1}\gamma}$.

Then, by Lemma \ref{Preliminary_bound}, for every $\delta>0$ there is a solution $f_L\in \mathcal{C}_{L^{-1}(\rho-\delta), 1}$ of the corresponding diophantine equation such that:
$$||f_L||_{L^{-1}(\rho-\delta)}\leq \frac{k_n}{(L^{-1}\gamma)(L^{-1}\delta)^{2n}} ||g\circ h_L||_{L^{-1}\rho}$$
$$||Df_L||_{L^{-1}(\rho-\delta)}\leq \frac{k_n}{(L^{-1}\gamma)(L^{-1}\delta)^{2n+1}} ||g\circ h_L||_{L^{-1}\rho}$$
Define $f:= f_L\circ h_{L^{-1}}\in \mathcal{C}_{\rho-\delta, L}$. Then $Df_L= L\ Df\circ h_L$ and note that the previous estimates are equivalent to those of the claim.
\end{proof}

Define the subspace $Per$ of periodic maps in $\mathcal{C}_{\rho, \Q}$; i.e.
$$Per:= \bigcup_{L\in\N}\mathcal{C}_{\rho, L}$$
By Lemma \ref{rescale}, we have a solution to our problem for every element in $Per$. We would like to have the same situation for every element in the closure of $Per$ respect to some norm. The supremum norm will not do as the next example shows: Consider the function $g$ in $\mathcal{C}_{\rho, \Q}$ as follows:
$$g(z):= \sum_{i\in\N}\frac{1}{i!}\cos\left(2\pi \frac{z_1+\ldots z_n}{i!}\right)$$
The solution of the diophantine equation respect to some diophantine vector $\omega$ and initial condition $f(0)=0$ is the following:
$$f(z)= \frac{1}{2\pi(\omega_1+\ldots \omega_n)}\sum_{i\in\N}\sin\left(2\pi \frac{z_1+\ldots z_n}{i!}\right)$$
Although $f$ converges locally uniformly in the region $||Im(z)||<\rho$, it does not converge uniformly hence it does not belong to the space $\mathcal{C}_{\rho, \Q}$; i.e. it cannot be lifted to the solenoid.

We need to consider a finer norm. Consider a function $g$ in $\mathcal{C}_{\rho, \Q}$ and its Pontryagin series \cite{RV}:
$$g(z)=\sum_{q\in\Q}g_q e^{2\pi i q\cdot z}$$
For every natural number $L$ define the periodic holomorphic function $g_L$ as follows:
$$g_L(z)=\sum_{q\in L^{-1}\Z^{n}}a_q e^{2\pi i q\cdot z}$$
Consider a cofinal sequence $\mathcal{S}=(n_i)_{i\in\N}$ in the divisibility net such that $n_i|n_{i+1}$ for every natural number $i$ and define the following norm on $\mathcal{C}_{\rho, \Q}$:
$$||g||_{\rho, \mathcal{S},n}:= n_1^{2n+1}\parallel g_{n_1}\parallel_\rho + \sum_{i>1}\ n_i^{2n+1}\parallel g_{n_i}-g_{n_{i-1}}\parallel_\rho$$
The space $\mathcal{C}_{\rho, \Q}$ under this new norm dominates the original supremum one and defines a Banach space on the subspace of $\mathcal{C}_{\rho, \Q}$ consisting of the finite vectors respect to this new norm. We have the following Theorem:

\begin{teo}\label{Diophantine_Solenoid}
Consider $g\in \mathcal{C}_{\rho, \Q}$ such that $g_0=0$ and a diophantine vector $\omega\in \Omega_{\gamma}$ for some positive constant $\gamma$. Suppose there is a cofinal sequence $\mathcal{S}=(n_i)_{i\in\N}$ in the divisibility net such that $n_i|n_{i+1}$ for every natural number $i$ and $||g||_{\rho, \mathcal{S},n}<\infty$. Then, for every $\delta>0$ there is a solution $f\in \mathcal{C}_{\rho-\delta, \Q}$ of the corresponding diophantine equation such that:
$$||f||_{\rho-\delta}\leq \frac{k_n}{\gamma\delta^{2n}} ||g||_{\rho, \mathcal{S},n}\quad and \quad ||Df||_{\rho-\delta}\leq \frac{k_n}{\gamma\delta^{2n+1}} ||g||_{\rho, \mathcal{S},n}$$
\end{teo}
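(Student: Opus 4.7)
The natural plan is to exploit the telescoping structure built into the norm $\|\cdot\|_{\rho,\mathcal{S},n}$ and reduce to the periodic case already handled by Lemma \ref{rescale}. First I would write
$$g \;=\; g_{n_1} \;+\; \sum_{i\geq 2}\bigl(g_{n_i}-g_{n_{i-1}}\bigr),$$
where $g_L$ denotes the $L$-periodic Pontryagin truncation of $g$. Because $n_{i-1}\mid n_i$, each increment $h_i:=g_{n_i}-g_{n_{i-1}}$ is $2\pi n_i$-periodic and thus belongs to $\mathcal{C}_{\rho,n_i}$; its zero mode vanishes since $g_0=0$ forces $(g_L)_0=0$ for every $L$. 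The convergence of this decomposition to $g$ in the supremum norm is a consequence of $g$ being limit periodic together with $\mathcal{S}$ being cofinal in the divisibility net.

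Next I would solve the diophantine equation $Df_i(\omega)=h_i$ on each level using Lemma \ref{rescale} with $L=n_i$, obtaining $f_i\in\mathcal{C}_{\rho-\delta,n_i}$ with
$$\|f_i\|_{\rho-\delta}\leq \frac{k_n\,n_i^{2n+1}}{\gamma\,\delta^{2n}}\,\|h_i\|_\rho,\qquad \|Df_i\|_{\rho-\delta}\leq \frac{k_n\,n_i^{2n+1}}{\gamma\,\delta^{2n+1}}\,\|h_i\|_\rho,$$
(with $h_1:=g_{n_1}$ for the initial term). This is precisely the reason that the norm $\|\cdot\|_{\rho,\mathcal{S},n}$ is weighted by the factor $n_i^{2n+1}$: it is calibrated exactly so that summing these Lemma \ref{rescale} estimates telescopes back to the hypothesis $\|g\|_{\rho,\mathcal{S},n}<\infty$.

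Then I would define $f:=\sum_{i\geq 1}f_i$. Summing the individual estimates gives
$$\|f\|_{\rho-\delta}\leq \frac{k_n}{\gamma\,\delta^{2n}}\,\|g\|_{\rho,\mathcal{S},n},\qquad \|Df\|_{\rho-\delta}\leq \frac{k_n}{\gamma\,\delta^{2n+1}}\,\|g\|_{\rho,\mathcal{S},n},$$
so the series converges absolutely in $\|\cdot\|_{\rho-\delta}$ and termwise in the derivative. Uniform convergence of holomorphic functions on $\|\mathrm{Im}(z)\|<\rho-\delta$ shows $f$ is holomorphic, and applying $D(\cdot)(\omega)$ term by term recovers the telescoping sum for $g$. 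Finally, the partial sum $\sum_{i=1}^{N}f_i$ is $2\pi n_N$-periodic (since $n_i\mid n_N$ for $i\leq N$), so $f$ lies in the closure of $Per$ under $\|\cdot\|_{\rho-\delta}$, which is precisely $\mathcal{C}_{\rho-\delta,\Q}$.

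The only genuinely delicate points are bookkeeping ones: checking that $(g_{n_i}-g_{n_{i-1}})_0=0$ so that Lemma \ref{rescale} applies at every step, and justifying that the limit of $n_N$-periodic holomorphic functions in the supremum norm on a tube is limit periodic rather than merely locally uniformly so (this is where the bad example in the section preceding the statement would otherwise derail us, and where the finer norm earns its keep). The main obstacle I anticipate is not any hard estimate but keeping the indexing and periodicity hypotheses straight when invoking Lemma \ref{rescale}; once the telescoping is written down, the proof is essentially forced by the definition of $\|\cdot\|_{\rho,\mathcal{S},n}$.
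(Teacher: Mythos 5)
Your proposal is correct and follows essentially the same route as the paper: the paper writes down the formal Pontryagin-series solution, truncates it to the levels $n_i$, and bounds the telescoping differences $f_{n_i}-f_{n_{i-1}}$ via Lemma \ref{rescale} and linearity, which is exactly your decomposition of $g$ into increments $g_{n_i}-g_{n_{i-1}}$ solved level by level. The only cosmetic difference is that you telescope on the data side while the paper telescopes on the solution side; by linearity of the diophantine equation these are the same computation.
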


\begin{proof}
Define $f_{n_0}:=0$. The Pontryagin series of $g$ is the following:
$$g(z)=\sum_{q\in\Q}g_q e^{2\pi i q\cdot z}$$
Consider the formal solution of the diophantine equation:
$$f(z)=\sum_{\substack{q\in\Q\\ q\neq 0}} b_q e^{2\pi i q\cdot z},\quad b_q:= \frac{a_q}{2\pi i(\omega\cdot q)},\quad q\neq 0$$
Note the \textsf{small divisor problem} on the definition of $b_q$. For every natural number $L$ define the holomorphic function $g_L$ as follows:
$$g_L(z)=\sum_{q\in L^{-1}\Z^{n}}a_q e^{2\pi i q\cdot z}$$
and consider an analogous, a priori formal, definition for $f$. By Lemma \ref{rescale}, for every natural number $L$ and every $\delta>0$, $f_L\in \mathcal{C}_{\rho-\delta, L}$. Moreover, the a priori formal $f$ is limit periodic:
\begin{eqnarray*}
||f-f_{n_j}||_{\rho-\delta} = \parallel \sum_{i>j}(f_{n_i}-f_{n_{i-1}})\parallel_{\rho-\delta}\leq \sum_{i>j}\parallel f_{n_i}-f_{n_{i-1}}\parallel_{\rho-\delta} \\
\leq \frac{k_n}{\gamma\delta^{2n}}\sum_{i>j}\ n_i^{2n+1}\parallel g_{n_i}-g_{n_{i-1}}\parallel_\rho\xrightarrow{j\to \infty} 0
\end{eqnarray*}
where we have used Lemma \ref{rescale} and the fact that the diophantine equation is linear. Taking $j=0$ in the above expression we conclude that $f\in \mathcal{C}_{\rho-\delta,\Q}$ and verifies the first estimate of the claim. The other estimate follows verbatim.
\end{proof}

\section{Preliminaries on Ahlfors-Bers theory}

In this section we follow closely \cite{Imayoshi}. Consider an orientation preserving homeomorphism $f$ from a domain $D\subset\C$ into $\C$. It is \textsf{quasiconformal} if it verifies the following conditions:
\begin{enumerate}
\item The distributional partial derivatives of $f$ respect to $z$ and $\bar{z}$ can be represented by locally integrable functions $f_z$ and $f_{\bar{z}}$ respectively on $D$.
\item There exists a constant $k$ with $0\leq k<1$ such that $|f_{\bar{z}}|\leq k |f_z|$ almost everywhere on $D$.
\end{enumerate}

A \textsf{Beltrami coefficient} is an element of the unit ball $L_\infty(\C)_1$.

\begin{teo}\label{normal_solution}
Consider $k$ such that $0\leq k <1$. Then, for every Beltrami coefficient $\mu$ with $||\mu||_{\infty}\leq k$ and compact support there is a unique quasiconformal map $f:\C\rightarrow \C$ such that $f(0)=0$ and $f_{z}-1\in L_{p}(\C)$ for some $p>2$ only depending on $k$ verifying:
$$f_{\bar{z}}= \mu f_{z}$$
on $\C$ in the sense of distributions.
\end{teo}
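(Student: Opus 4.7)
The plan is to run the classical Cauchy-transform/Beurling-transform fixed-point argument, reducing the Beltrami equation to a linear equation in $L^p$. Introduce the Cauchy transform
$$P\phi(z):=-\frac{1}{\pi}\int_{\C}\frac{\phi(\zeta)}{\zeta-z}\,dA(\zeta)$$
and the Beurling transform
$$T\phi(z):=-\frac{1}{\pi}\,\mathrm{p.v.}\!\int_{\C}\frac{\phi(\zeta)}{(\zeta-z)^{2}}\,dA(\zeta).$$
The two operators I need to use are related by $\partial_{\bar z}P\phi=\phi$ and $\partial_{z}P\phi=T\phi$ in the distributional sense, $T$ is a unitary operator on $L^{2}(\C)$, and by the Calder\'on--Zygmund theorem $T$ extends to a bounded operator on $L^{p}(\C)$ for every $1<p<\infty$ whose operator norm $\|T\|_{p}$ depends continuously on $p$ and satisfies $\|T\|_{2}=1$.

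With these tools the first step is to pick $p>2$ so close to $2$ that $k\,\|T\|_{p}<1$; such a $p$ exists and depends only on $k$. Then on $L^{p}(\C)$ the operator $\phi\mapsto \mu\, T\phi$ is a contraction, so the linear equation
$$\phi=\mu+\mu\,T\phi$$
has a unique solution $\phi\in L^{p}(\C)$; since $\mu$ has compact support the solution also lies in $L^{2}$ by a bootstrap using the $L^{2}$-isometry of $T$. Setting
$$f(z):=z+P\phi(z),$$
we immediately get $f_{\bar z}=\phi$ and $f_{z}-1=T\phi\in L^{p}(\C)$; combining with the fixed-point equation yields $f_{\bar z}=\mu(1+T\phi)=\mu f_{z}$ distributionally, and $P\phi(0)$ can be absorbed into a translation that achieves $f(0)=0$ after we verify continuity.

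The more delicate step is showing that $f$ is a homeomorphism of $\C$ (and not merely a Sobolev solution of the PDE), together with uniqueness. For regularity and injectivity I would first mollify: approximate $\mu$ by smooth compactly supported $\mu_{n}$ with $\|\mu_{n}\|_{\infty}\leq k$ and $\mu_{n}\to\mu$ in every $L^{q}$. For smooth $\mu_{n}$ the classical existence theory (local charts via the Newlander--Nirenberg / Korn--Lichtenstein theorem, or a direct contraction argument for smooth data) produces quasiconformal diffeomorphisms $f_{n}$ normalized by $f_{n}(0)=0$ and $(f_{n})_{z}-1\in L^{p}$. The $L^{p}$ bounds on $\phi_{n}=(f_{n})_{\bar z}$ are uniform (they depend only on $k$ and $\|T\|_{p}$), hence by Sobolev embedding $W^{1,p}\hookrightarrow C^{0,1-2/p}$ applied to $f_{n}-z$ the family $\{f_{n}\}$ is equicontinuous and locally uniformly bounded on compact sets; a normal-family argument produces a limit $f$ which is a solution to the original equation and satisfies the stated normalization. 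Injectivity of each $f_{n}$ and the fact that the limit of a locally uniformly convergent sequence of quasiconformal homeomorphisms with fixed dilatation bound is either a homeomorphism or constant (Hurwitz-type theorem for quasiconformal maps) rules out degeneration. Finally, uniqueness: if $f$ and $g$ are two such solutions, the map $h=f\circ g^{-1}$ has $h_{\bar w}=0$ a.e., so by Weyl's lemma $h$ is entire; since $h_{w}-1\in L^{p}(\C)$ with $p>2$, $h_{w}$ is bounded entire and $\to 1$ at infinity, hence $h_{w}\equiv 1$ and the normalization $h(0)=0$ forces $h(w)=w$.

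The main obstacle is precisely this last step: verifying that the $L^{p}$-solution produced by the fixed-point equation is genuinely a homeomorphism rather than merely a Sobolev function solving the PDE. Everything else is a direct application of the $L^{p}$-boundedness of the Beurling transform; it is the passage from \emph{weak solution} to \emph{quasiconformal homeomorphism} (together with the attendant Stoilow-factorization / approximation argument) that carries the real content of the theorem.
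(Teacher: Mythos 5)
This theorem is stated in the paper as a quoted preliminary (the section announces that it follows Imayoshi--Taniguchi), so the paper gives no proof of its own; your reconstruction is precisely the standard argument from that source: the Cauchy transform $P$ and the Calder\'on--Zygmund/Beurling operator $T$, the choice of $p>2$ with $k\|T\|_p<1$, the contraction $\phi=\mu+\mu T\phi$, the representation $f=z+P\phi$, smooth approximation plus normal families to upgrade the Sobolev solution to a homeomorphism, and Weyl's lemma for uniqueness. This is correct and essentially identical to the cited proof, so there is nothing to reconcile.
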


\noi The equation in the previous Lemma is the \textsf{Beltrami equation associated to $\mu$} and its quasiconformal solution is \textsf{normal}. The constant $p$ in the previous Theorem is not unique; i.e. It is not a parameter of the solution. We will need the following lifting and composition results:

\begin{lema}
Suppose there are domains $D\subset\C$ and $\tilde{D}\subset\C$ with a covering map $\zeta:\tilde{D}\rightarrow D$. Given an orientation preserving homeomorphism $f$ of $D$ into itself, there is an orientation preserving homeomorphism $\tilde{f}$ from $\tilde{D}$ into itself such that $f\circ \zeta= \zeta\circ \tilde{f}$; i.e. A \textsf{lifting by $\zeta$}. If $f$ is a quasiconformal map and verifies the Beltrami equation associated to some Beltrami coefficient $\mu$, then idem every lifting by $\zeta$ with $\zeta^{*}(\mu)$. Moreover, if $f$ is normal then so is $\tilde{f}$. 
\end{lema}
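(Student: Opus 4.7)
The plan is to construct $\tilde f$ by standard covering-space lifting, then transfer the Beltrami equation through $\zeta$ by exploiting that a covering map between plane domains is locally biholomorphic, and finally inspect the normalization.

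First I would produce $\tilde f$. Applying the standard lifting criterion to the continuous map $f\circ \zeta:\tilde D\to D$, a lift through $\zeta$ exists as soon as $(f\circ\zeta)_{*}\pi_{1}(\tilde D)\subset \zeta_{*}\pi_{1}(\tilde D)$; since $f$ is a homeomorphism of $D$ into itself the induced map preserves this subgroup, so after fixing basepoints there is a unique continuous $\tilde f:\tilde D\to\tilde D$ with $\zeta\circ\tilde f=f\circ\zeta$. Because $\zeta$ is a local homeomorphism and $f$ is an orientation-preserving injection, $\tilde f$ inherits these properties locally. Global injectivity follows from the intertwining identity: if $\tilde f(\tilde z_{1})=\tilde f(\tilde z_{2})$ then $\zeta(\tilde z_{1})=\zeta(\tilde z_{2})$ by injectivity of $f$, after which uniqueness of the lift forces $\tilde z_{1}=\tilde z_{2}$.

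Next I would verify the Beltrami equation. The covering $\zeta$ is holomorphic and a local biholomorphism, so in every chart where $\zeta$ is invertible the distributional chain rule applies to $\zeta\circ\tilde f=f\circ\zeta$ and yields
$$(\zeta'\circ\tilde f)\,\tilde f_{\bar z}=(f_{\bar z}\circ\zeta)\,\overline{\zeta'},\qquad (\zeta'\circ\tilde f)\,\tilde f_{z}=(f_{z}\circ\zeta)\,\zeta'.$$
Dividing and using $f_{\bar z}=\mu f_{z}$ gives
$$\tilde f_{\bar z}=\left(\mu\circ\zeta\cdot\frac{\overline{\zeta'}}{\zeta'}\right)\tilde f_{z},$$
which is exactly the Beltrami equation associated to $\zeta^{*}(\mu)$. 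Since $|\zeta^{*}(\mu)|=|\mu\circ\zeta|$ almost everywhere, the quasiconformal constant $k$ of $f$ transfers unchanged to $\tilde f$. For the normality part of Theorem \ref{normal_solution}, I would choose the particular lift sending $0$ to $0$ (possible because $f(0)=0$ and $0\in\zeta^{-1}(0)$ in the setting considered), and then tile $\C$ by sheets of the covering. On each sheet the second identity above writes $\tilde f_{z}=\bigl(\zeta'/(\zeta'\circ\tilde f)\bigr)\,(f_{z}\circ\zeta)$; the ratio is bounded on compact pieces where $\zeta$ is a biholomorphism, and a change of variables reduces the $L_{p}$-integrability of $\tilde f_{z}-1$ to the known $L_{p}$-estimate for $f_{z}-1$.

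The main obstacle is the careful distributional handling in the second step: the derivatives of $f$ are only $L^{p}_{loc}$, so the chain rule through $\zeta$ must be justified locally in charts where $\zeta$ is a biholomorphism (or, equivalently, by approximating $f$ by smooth quasiconformal maps), rather than by a naive pointwise computation. Ramification points of $\zeta$ (if any, as for $z\mapsto z^{n}$ at the origin) have to be handled separately, for instance by a removability argument for bounded distributional solutions of the Beltrami equation on a punctured disk.
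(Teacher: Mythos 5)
The paper offers no proof of this lemma at all: it is quoted as a standard preliminary from \cite{Imayoshi}, so your proposal has to stand on its own. Your overall route (covering-space lifting, transport of the Beltrami coefficient through the locally biholomorphic $\zeta$, then the normalization) is the right one, and the chain-rule computation giving $\tilde{f}_{\bar z}=(\mu\circ\zeta)\,(\overline{\zeta'}/\zeta')\,\tilde{f}_z$ is correct, including your remark that it must be justified in charts where $\zeta$ is a biholomorphism. However, two steps have genuine gaps. The first is the existence of the lift: you claim $(f\circ\zeta)_{*}\pi_{1}(\tilde{D})\subset\zeta_{*}\pi_{1}(\tilde{D})$ holds ``since $f$ is a homeomorphism of $D$ into itself.'' That is false for a general covering. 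The condition is that $f_{*}$ preserve the subgroup $\zeta_{*}\pi_{1}(\tilde{D})$, and a self-homeomorphism of a plane domain with nonabelian free fundamental group can move such a subgroup off itself (e.g.\ a three-holed sphere with the covering corresponding to one boundary generator and an $f$ permuting the boundary components). The lemma is saved in this paper only because the coverings actually used are $z\mapsto z^{n}$ on $\C^{*}$ and $\exp$, for which every subgroup $n\Z\subset\Z$ is characteristic; you need to either invoke that or add it as a hypothesis.

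The second gap is injectivity of $\tilde{f}$. From $\tilde{f}(\tilde{z}_{1})=\tilde{f}(\tilde{z}_{2})$ you correctly get $\zeta(\tilde{z}_{1})=\zeta(\tilde{z}_{2})$, but the conclusion ``uniqueness of the lift forces $\tilde{z}_{1}=\tilde{z}_{2}$'' is a non sequitur: uniqueness of lifts compares two \emph{maps} that agree at a point, whereas here you only have two points in one fiber. The standard repair is to lift $f^{-1}$ with the basepoint chosen so that its lift sends $\tilde{f}(\tilde{x}_{0})$ back to $\tilde{x}_{0}$, and then use uniqueness of lifts to show the composite is the identity of $\tilde{D}$, giving bijectivity in one stroke. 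Finally, the normality transfer is asserted rather than proved: the difficulty with $\tilde{f}_{z}-1\in L_{p}(\C)$ is not boundedness of $\zeta'/(\zeta'\circ\tilde{f})$ on compact pieces but integrability near $\infty$ and near the ramification point of $z\mapsto z^{n}$; one needs a growth estimate of the type of Lemma \ref{Cota1_Prel} to control $|\tilde{f}(z)-z|$ at infinity, and a weighted change of variables (the Jacobian of $\zeta$ introduces the weight $|w|^{-2(n-1)/n}$) to reduce $f_{z}(\zeta(z))-1$ to the known $L_{p}$ bound for $f_{z}-1$. All of this is repairable, but as written the proof does not close.
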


\begin{lema}
The composition of quasiconformal normal maps is a quasiconformal normal map.
\end{lema}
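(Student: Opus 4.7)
\emph{Proof plan.} Let $f,g$ be two normal quasiconformal maps with compactly supported Beltrami coefficients $\mu_f,\mu_g$, and set $h := g\circ f$. That $h$ is an orientation-preserving quasiconformal homeomorphism of $\C$ with $h(0)=g(f(0))=g(0)=0$ follows from the standard chain rule for quasiconformal maps, with dilatation bounded by the product of those of $f$ and $g$. The nontrivial point is to show $h_z-1\in L_p(\C)$ for some $p>2$; a direct computation is unpleasant because the chain rule for $h_z$ would force a change of variables involving the Jacobian of $f$. The plan is to bypass this by invoking the uniqueness clause of Theorem~\ref{normal_solution}.

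First, compute the Beltrami coefficient of $h$ via the chain rule:
\[
\mu_h(z)\;=\;\frac{\mu_f(z)+(\mu_g\circ f)(z)\,\theta(z)}{1+\overline{\mu_f(z)}\,(\mu_g\circ f)(z)\,\theta(z)},\qquad \theta(z):=\overline{f_z(z)}/f_z(z),
\]
noting that $\|\mu_h\|_\infty<1$ and that $\mathrm{supp}\,\mu_h\subset \mathrm{supp}\,\mu_f\cup f^{-1}(\mathrm{supp}\,\mu_g)$, which is compact because $f$ is a homeomorphism of $\C$. Theorem~\ref{normal_solution} then provides a normal quasiconformal map $\tilde h$ with Beltrami coefficient $\mu_h$ and $\tilde h(0)=0$; the goal is to identify $h$ with $\tilde h$.

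Since $h$ and $\tilde h$ satisfy the Beltrami equation for the same $\mu_h$, the composition $\tilde h\circ h^{-1}:\C\to\C$ has vanishing distributional Beltrami coefficient and is therefore holomorphic by Weyl's lemma; being a homeomorphism of $\C$, it must be an affine map, so $\tilde h(z)=a\,h(z)+b$ with $a\neq 0$. The condition $h(0)=\tilde h(0)=0$ forces $b=0$. To pin down $a$ one uses the asymptotic behaviour at infinity: a normal quasiconformal map is holomorphic outside the (compact) support of its Beltrami coefficient, and the $L_p$ condition on its $z$-derivative minus one rules out a nonzero linear term in the Laurent expansion of the map minus the identity at $\infty$, forcing $f(z)/z\to 1$ as $z\to\infty$. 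Applying this to $\tilde h$ and to $f,g$ separately gives $\tilde h(z)/z\to 1$ and $h(z)/z=\bigl(g(f(z))/f(z)\bigr)\cdot\bigl(f(z)/z\bigr)\to 1$, so $a=1$ and $h=\tilde h$ is normal.

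The step I expect to be the principal technical nuisance is this last asymptotic analysis: while standard in the Ahlfors--Bers setup, it requires unpacking how the $L_p$ condition on $f_z-1$, the compact support of $\mu_f$, and the fact that $f$ is a homeomorphism of $\C$ (so $f(\infty)=\infty$) cooperate to yield $f(z)\sim z$ at $\infty$. Everything else---the chain rule, the Beltrami coefficient composition formula, Weyl's lemma, and the direct application of Theorem~\ref{normal_solution}---is off the shelf.
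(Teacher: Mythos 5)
Your argument is correct, but note that the paper does not actually prove this lemma: it is stated without proof in the preliminaries, imported as a standard fact from \cite{Imayoshi} (the whole section ``follows closely'' that reference). So there is no proof of record to compare against; what you have written is a legitimate self-contained derivation. Your route --- compute $\mu_h$ by the chain rule, observe it is compactly supported with $\|\mu_h\|_\infty\le(k_f+k_g)/(1+k_fk_g)<1$, produce the normal solution $\tilde h$ from Theorem~\ref{normal_solution}, and identify $h=\tilde h$ via Weyl's lemma plus normalization at $0$ and $\infty$ --- is the standard one. The step you flag as the ``principal technical nuisance,'' namely that a normal solution with compactly supported coefficient satisfies $f(z)/z\to 1$ at infinity, does not need to be re-derived from the Laurent expansion: it is exactly the content of Lemma~\ref{Cota1_Prel}, which gives $|f(\zeta)-\zeta|\le A\|\mu\|_\infty|\zeta|^{1-2/p}$ with $1-2/p<1$, so you may simply cite it for $f$, $g$ and $\tilde h$ and conclude $a=1$ immediately. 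The only point worth making explicit is that $h_z-1\in L_{p_h}(\C)$ for the exponent $p_h$ attached to $k_h=\|\mu_h\|_\infty$, which may differ from the exponents of $f$ and $g$; this is harmless since the definition of normal only asks for \emph{some} $p>2$.
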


We will also need the following Lemmas. Because we will apply them to a family of maps, we need to be precise on the dependence of the constants involved.

\begin{lema}\label{Cota1_Prel}
If $f$ is a normal solution of the Beltrami equation associated to $\mu$ such that $\mu$ has compact support, then there is a constants $A$ and a real number $p>2$ such that:
$$|f(\zeta)-\zeta|\leq A||\mu||_{\infty}|\zeta|^{1-2/p}$$
The constant $A$ depends only on the area of the $\mu$ support and $p$. Moreover, it is strictly increasing respect to area of the $\mu$ support.
\end{lema}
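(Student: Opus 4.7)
The plan is to use the standard integral representation of the normal solution via the Cauchy and Beurling transforms. Set $h:=f_{\bar z}$ and write $P$ for the Cauchy transform $Ph(z):=-\tfrac{1}{\pi}\iint_{\C}\tfrac{h(w)}{w-z}\,dA(w)$ and $T$ for the Beurling transform $Th(z):=-\tfrac{1}{\pi}\,\mathrm{p.v.}\iint_{\C}\tfrac{h(w)}{(w-z)^{2}}\,dA(w)$. First I would observe that $f-Ph$ is holomorphic (since $\partial_{\bar z}(Ph)=h$) and has $\partial_z$-derivative equal to $f_z-Th=1$, so $f(z)=z+Ph(z)+C$; the normalization $f(0)=0$ forces $C=-Ph(0)$. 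A routine rewriting of $\tfrac{1}{w-\zeta}-\tfrac{1}{w}$ as $\tfrac{\zeta}{w(w-\zeta)}$ then yields the representation
\[
f(\zeta)-\zeta\;=\;Ph(\zeta)-Ph(0)\;=\;-\frac{\zeta}{\pi}\iint_{\C}\frac{h(w)}{w(w-\zeta)}\,dA(w).
\]

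Next I would apply H\"older's inequality with conjugate exponents $p$ and $q=p/(p-1)$ and perform the substitution $w=\zeta u$. This factorizes $|\zeta|^{1-2/p}$ out and reduces the remaining integral to a universal constant
\[
B_q\;:=\;\iint_{\C}\frac{dA(u)}{|u|^{q}|u-1|^{q}},
\]
which is finite precisely when $1<q<2$, i.e. $p>2$ (the singularities at $0$, $1$, and $\infty$ are elementary to handle). The outcome at this stage is the clean bound $|f(\zeta)-\zeta|\leq\pi^{-1}B_q^{1/q}\,\|h\|_p\,|\zeta|^{1-2/p}$.

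It remains to control $\|h\|_p$. Since $h=\mu f_z$ vanishes outside $\mathrm{supp}(\mu)$ and $f_z=1+Th$, H\"older on the support together with the Calder\'on--Zygmund bound $\|T\|_p<\infty$ on $L^{p}(\C)$ yields
\[
\|h\|_{p}\;\leq\;\|\mu\|_{\infty}\bigl(|\mathrm{supp}(\mu)|^{1/p}+\|T\|_{p}\,\|h\|_{p}\bigr),
\]
hence $\|h\|_p\leq\|\mu\|_\infty\,|\mathrm{supp}(\mu)|^{1/p}/(1-\|\mu\|_\infty\|T\|_p)$. Plugging back produces the asserted inequality with $A=\pi^{-1}B_q^{1/q}\,|\mathrm{supp}(\mu)|^{1/p}/(1-\|\mu\|_\infty\|T\|_p)$, which is manifestly strictly increasing in $|\mathrm{supp}(\mu)|$ because $1/p>0$.

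The main technical obstacle is to guarantee that one can actually pick $p>2$ with $\|\mu\|_\infty\|T\|_p<1$, since otherwise the Neumann-type bound on $\|h\|_p$ breaks down. This is however precisely the exponent supplied by Theorem \ref{normal_solution}: since $\|T\|_2=1$ and $p\mapsto\|T\|_p$ is continuous, for any bound $k$ with $\|\mu\|_\infty\leq k<1$ there is an entire interval $(2,p_0(k)]$ of admissible exponents, and the constant $A$ then depends only on the area of $\mathrm{supp}(\mu)$ and on $p$ (and implicitly on $k$ through this choice). Apart from this, the argument consists of standard integral estimates.
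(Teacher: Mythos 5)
Your argument is correct and is precisely the standard Cauchy/Beurling-transform proof that the paper itself omits, citing it as a preliminary from Imayoshi--Taniguchi; the representation $f(\zeta)-\zeta=-\tfrac{\zeta}{\pi}\iint\tfrac{h(w)}{w(w-\zeta)}\,dA(w)$, the H\"older estimate with $B_q$ finite for $1<q<2$, and the Neumann bound on $\|h\|_p$ are all the expected steps. The only point worth flagging is that your constant $A$ contains the factor $(1-\|\mu\|_\infty\|T\|_p)^{-1}$, so to get the uniformity the paper later needs (a single $A$ for a whole family of differentials) you should replace $\|\mu\|_\infty$ by the fixed bound $k$ there, exactly as you indicate in your closing remark.
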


\begin{lema}\label{Cota2_prel}
If $f$ is a normal solution of the Beltrami equation associated to $\mu$ such that $\mu$ has compact support, then there is a constant $B$ and a real number $p>2$ such that:
$$|\zeta|\leq B||\mu||_{\infty}|f(\zeta)|^{1-2/p} + |f(z)|$$
The constant $B$ depends only on the area of the $\mu$ support and $p$. Moreover, it is strictly increasing respect to area of the $\mu$ support.
\end{lema}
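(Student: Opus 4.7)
The strategy is to apply Lemma \ref{Cota1_Prel} to the inverse homeomorphism $g = f^{-1}$. Since $f$ is quasiconformal with $f(0)=0$, its inverse $g$ is an orientation preserving homeomorphism of $\C$ with $g(0)=0$, and a standard computation shows $g$ satisfies the Beltrami equation $g_{\bar w} = \nu\, g_w$ with coefficient
$$\nu(w) = -\mu(g(w))\, \frac{\overline{f_z(g(w))}}{f_z(g(w))},$$
so in particular $\|\nu\|_\infty = \|\mu\|_\infty$. The support of $\nu$ is $f(\text{supp}(\mu))$, which is compact because $f$ is a homeomorphism and $\mu$ has compact support.

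The next step is to verify that $g$ is a \emph{normal} solution in the sense of Theorem \ref{normal_solution}, so that Lemma \ref{Cota1_Prel} can be invoked for $g$. The condition $g(0)=0$ is immediate. The condition $g_w - 1 \in L_{q}(\C)$ for some $q>2$ depending only on $\|\mu\|_\infty$ follows from the uniqueness statement of Theorem \ref{normal_solution} applied to $\nu$: the unique normal solution associated to $\nu$ must coincide with $g$, because both are quasiconformal homeomorphisms of $\C$ fixing $0$, with the same Beltrami coefficient, which forces the difference to be a conformal map of $\C$ fixing $0$ and behaving like the identity at infinity, hence the identity.

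With normality of $g$ established, Lemma \ref{Cota1_Prel} gives a constant $A'$ and an exponent $p>2$ (depending on the area of $\text{supp}(\nu) = f(\text{supp}(\mu))$ and on $p$) such that
$$|g(w) - w| \leq A' \|\nu\|_\infty\, |w|^{1-2/p} = A' \|\mu\|_\infty\, |w|^{1-2/p}.$$
Setting $w = f(\zeta)$, so that $g(w) = \zeta$, we obtain
$$|\zeta - f(\zeta)| \leq A' \|\mu\|_\infty\, |f(\zeta)|^{1-2/p},$$
and the triangle inequality yields the claimed bound with $B := A'$. The monotonicity of $B$ in the area of $\text{supp}(\mu)$ is inherited from the monotonicity of $A'$ in the area of $\text{supp}(\nu) = f(\text{supp}(\mu))$, combined with the fact that this latter area is itself monotonic in the area of $\text{supp}(\mu)$ via the quasiconformal distortion bounds for $f$.

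The main obstacle I anticipate is the normality check for $g$: proving that the required integrability $g_w - 1 \in L_q$ is a consequence of $f$ being normal, rather than an independent hypothesis. Once this is settled (by the uniqueness argument sketched above, or equivalently by direct $L_p$ estimates on the Cauchy and Beurling transforms that underlie Theorem \ref{normal_solution}), the rest of the argument is a clean substitution $w = f(\zeta)$ in the estimate of Lemma \ref{Cota1_Prel}. A secondary subtlety is ensuring that the area of $f(\text{supp}(\mu))$ is controlled by the area of $\text{supp}(\mu)$ with monotone dependence, which uses the quasiconformal distortion of area bounded in terms of $\|\mu\|_\infty$.
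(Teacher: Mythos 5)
The paper does not actually prove this lemma: it is stated as a quoted preliminary from Imayoshi--Taniguchi, so there is no internal proof to compare against. Your argument is the standard one for this estimate and is essentially correct: apply Lemma \ref{Cota1_Prel} to $g=f^{-1}$, identify $g$ with the normal solution for $\nu$ via uniqueness plus the asymptotics $f(\zeta)/\zeta\to 1$ at infinity (which themselves come from Lemma \ref{Cota1_Prel} applied to $f$), and substitute $w=f(\zeta)$. Your formula for $\nu$ has the conjugation on the wrong factor (it should be $\nu=-\mu(g)\,f_z(g)/\overline{f_z(g)}$), but only $|\nu|$ matters, so this is harmless. The one point you should be more careful about is the constant: what you actually obtain is a $B$ depending on the area of $f(\mathrm{supp}\,\mu)$, and converting this to a dependence on the area of $\mathrm{supp}\,\mu$ via the $L_p$ bound on $f_z-1$ (or area distortion) necessarily introduces a dependence on $\|\mu\|_\infty$ (equivalently on $k$ and $p$) that the literal statement of the lemma suppresses; moreover the area of the image is not itself a function of the area of the source, so the monotonicity claim should be phrased as: $B$ can be chosen as $A'$ evaluated at a monotone upper bound for the image area. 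Both caveats are immaterial for the way the lemma is used later in the paper, where $k$, $p$ and the supports are uniformly controlled across the family.
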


The following is the celebrated Ahlfors-Bers Theorem.

\begin{teo}
For every Beltrami coefficient $\mu$ there is a unique quasiconformal map $f^{\mu}$ verifying the Beltrami equation associated to $\mu$ and fixing $0$, $1$ and $\infty$.
\end{teo}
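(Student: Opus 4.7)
The plan is to bootstrap from Theorem \ref{normal_solution}, which only handles compactly supported Beltrami coefficients, up to the case of an arbitrary $\mu\in L_\infty(\C)_1$, and then normalize and prove uniqueness. The existence argument relies on the classical splitting-plus-inversion trick; the uniqueness argument relies on Weyl's lemma applied to the Beltrami equation with vanishing coefficient.

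First I would split $\mu=\mu_1+\mu_2$ with $\mu_1:=\mu\,\chi_{\{|z|\le 1\}}$ and $\mu_2:=\mu\,\chi_{\{|z|>1\}}$. Applying Theorem \ref{normal_solution} to $\mu_1$ produces a normal quasiconformal solution $f_1$ of $\partial_{\bar z}f_1=\mu_1\partial_z f_1$. To handle $\mu_2$, I pull back through the inversion $\sigma(z)=1/z$: the Beltrami coefficient $\tilde\mu_2:=\sigma^{*}\mu_2$ has the form $\tilde\mu_2(w)=(\bar w/w)^{2}\,\mu_2(1/w)$, which is still in $L_\infty(\C)_1$ with the same norm but is now compactly supported (in $|w|\le 1$). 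Applying Theorem \ref{normal_solution} again yields a normal quasiconformal map $g$ solving the Beltrami equation for $\tilde\mu_2$. Setting $h:=\sigma\circ g\circ\sigma$ gives a quasiconformal homeomorphism of $\widehat{\C}$ whose Beltrami coefficient is $\mu_2$. Finally, I would write $f^{\mu}:=h_1\circ f_1$, where $h_1$ is the quasiconformal solution of the Beltrami equation whose coefficient equals the pushforward of $\mu_2$ under $f_1$ (obtained by the same inversion procedure, since the pushforward is supported off a compact neighbourhood of $0$). The chain rule for Beltrami coefficients then shows that $f^{\mu}$ solves the Beltrami equation associated to $\mu$ on $\C$ and extends to a homeomorphism of $\widehat{\C}$.

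To normalize, note that $f^{\mu}$ so constructed is a quasiconformal homeomorphism of the Riemann sphere into itself, hence it sends $0$, $1$, $\infty$ to three distinct points. Post-composing with the unique Möbius transformation taking this triple back to $(0,1,\infty)$ yields a solution (still denoted $f^\mu$) with the required normalization, and this does not alter the Beltrami coefficient because Möbius transformations are conformal.

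For uniqueness, suppose $f$ and $\tilde f$ are two normalized quasiconformal solutions of the Beltrami equation associated to $\mu$. The composition $F:=\tilde f\circ f^{-1}$ is quasiconformal on $\widehat{\C}$, and a direct computation with the chain rule shows that its Beltrami coefficient vanishes almost everywhere. By Weyl's lemma (applied to the distributional equation $\partial_{\bar z}F=0$), $F$ is holomorphic on $\C$ and therefore a Möbius transformation of $\widehat{\C}$. Since $F$ fixes $0$, $1$, and $\infty$, we conclude $F=\mathrm{id}$ and hence $\tilde f=f$.

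The main obstacle I expect is the gluing step: verifying rigorously that the chain-rule computation producing the Beltrami coefficient of $h_1\circ f_1$ recovers $\mu$ almost everywhere, which requires knowing that the distributional derivatives of a quasiconformal map compose as classical derivatives do. This is the place where the assumption $f_z-1\in L^{p}(\C)$ with $p>2$ from Theorem \ref{normal_solution} is essential, since it gives enough integrability for Sobolev composition, and where Lemmas \ref{Cota1_Prel} and \ref{Cota2_prel} control the location of the supports so that the inversion trick for $\mu_2$ actually produces compactly supported data.
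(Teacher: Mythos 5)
Your proposal is correct: it is the standard derivation of the Ahlfors--Bers theorem from the compactly supported case (Theorem \ref{normal_solution}) via the decomposition $\mu=\mu_1+\mu_2$, the inversion trick to compactify the support near $\infty$, the composition formula for Beltrami coefficients, and Weyl's lemma for uniqueness. Note, however, that the paper does not prove this statement at all --- it is quoted in the preliminaries from \cite{Imayoshi} --- although exactly your splitting-plus-inversion-plus-composition device is the one the authors themselves deploy later when removing the compact support hypothesis in the existence part of the proof of Theorem \ref{Main}, so your argument is consistent with their intended toolkit. The only blemish is the unimodular factor in your formula for $\sigma^{*}\mu_2$: with the paper's convention $\gamma^{*}\mu=(\mu\circ\gamma)\,\overline{\gamma_z}/\gamma_z$ and $\sigma(w)=1/w$ one gets $(w/\bar w)^{2}\mu_2(1/w)$ rather than $(\bar w/w)^{2}\mu_2(1/w)$; this does not affect the $L_\infty$ norm or any step of the argument.
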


\begin{prop}\label{Imayoshi_Taniguchi_Convergence_infty}
If $\mu_{n}$ tends to $\mu$ in $L_{\infty}(\C)$, then $f^{\mu_{n}}$ tends to $f^{\mu}$ locally uniformly.
\end{prop}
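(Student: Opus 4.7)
The strategy is to first establish the proposition when the Beltrami coefficients share a common compact support, and then to reduce the general case via the inversion $\sigma(z)=1/z$. The key analytic input in the compactly supported case is Lemma \ref{Cota1_Prel}, applied to the composition $h_n := g_n \circ g^{-1}$ of normal solutions.

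\textbf{Stage 1 (common compact support).} Assume $\mathrm{supp}(\mu_n)\cup\mathrm{supp}(\mu)\subseteq K$ for a fixed compact $K$, with $\|\mu_n\|_\infty,\|\mu\|_\infty\leq k<1$ (this can be arranged for large $n$ once $\mu_n\to\mu$). Let $g_n,g$ be the normal solutions from Theorem \ref{normal_solution} and set $h_n:=g_n\circ g^{-1}$. By the composition lemma $h_n$ is itself a normal quasiconformal map, and the chain rule for Beltrami coefficients gives pointwise
$$\bigl|\mu_{h_n}\circ g\bigr|=\left|\frac{\mu_n-\mu}{1-\bar\mu\,\mu_n}\right|\leq \frac{\|\mu_n-\mu\|_\infty}{1-k^{2}},$$
so $\|\mu_{h_n}\|_\infty\to 0$ and $\mathrm{supp}(\mu_{h_n})\subseteq g(K)$, a fixed compact set. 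Lemma \ref{Cota1_Prel} then yields
$$|h_n(\zeta)-\zeta|\leq A\,\|\mu_{h_n}\|_\infty\,|\zeta|^{1-2/p},$$
with $A$ and $p$ uniform in $n$, since the support of each $\mu_{h_n}$ lies in the fixed compact $g(K)$. Hence $h_n\to\mathrm{id}$ locally uniformly, and therefore $g_n=h_n\circ g\to g$ locally uniformly. Because normal solutions fix $0$ and $\infty$, the Ahlfors--Bers solutions are obtained from $g_n$ and $g$ by the affine normalizations $f^{\mu_n}=g_n/g_n(1)$ and $f^\mu=g/g(1)$; since $g_n(1)\to g(1)\neq 0$, we conclude $f^{\mu_n}\to f^\mu$ locally uniformly.

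\textbf{Stage 2 (reduction).} For arbitrary $\mu_n\to\mu$ in $L_\infty(\C)_1$, fix $R>0$ and split $\mu=\chi_{D_R}\mu+\chi_{\C\setminus D_R}\mu$, and similarly for each $\mu_n$. The inversion $\sigma$ pulls the exterior tail back to a Beltrami coefficient supported in $D_{1/R}$, whose pullbacks converge in $L_\infty$ to the pullback of the tail of $\mu$. The classical factorization then expresses $f^\mu$ as a composition of a normal solution for the inverted (compactly supported) tail, conjugated by $\sigma$, with a normal solution for the push-forward of $\chi_{D_R}\mu$; both factors now carry compactly supported Beltrami coefficients. Stage 1 applied to each factor, together with continuity of composition on compact sets, yields $f^{\mu_n}\to f^\mu$ locally uniformly.

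\textbf{Main obstacle.} The delicate point is Stage 2: the Beltrami coefficient of the outer factor depends nonlinearly on the push-forward by the inner normal solution, and one must verify that this push-forward Beltrami coefficient converges in $L_\infty$, not merely locally. Stage 1 applied to the inner solutions, combined with Lemma \ref{Cota2_prel} to control the inverse maps on compacts, supplies the needed estimates, but tracking how the change of variables interacts with the $L_\infty$ norm of the transformed coefficients is the main bookkeeping step of the argument.
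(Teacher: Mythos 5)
The paper does not prove this proposition: it is quoted verbatim from \cite{Imayoshi} as part of the preliminaries, so your attempt has to stand on its own.

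Your Stage 1 is correct and is essentially the standard argument: the dilatation of $h_n=g_n\circ g^{-1}$ is bounded by $\|\mu_n-\mu\|_\infty/(1-k^2)$ and supported in the fixed compact $g(K)$, Lemma \ref{Cota1_Prel} then gives $h_n\to\mathrm{id}$ locally uniformly with uniform constants, and renormalizing by $g_n(1)\to g(1)\neq 0$ passes from normal solutions to the $0,1,\infty$-normalized ones.

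The genuine gap is in Stage 2, precisely at the point you flag but then wave away. For the outer factor you need $\mu_{2,n}\to\mu_2$ in $L_\infty$, where $\mu_{2,n}$ is obtained from $\nu_n:=(\mu_n-\mu_{1,n})/(1-\mu_n\overline{\mu_{1,n}})$ by composing with $f_{1,n}^{-1}$ and multiplying by a unimodular factor built from $\partial f_{1,n}$. Neither operation is continuous in $L_\infty$ along a varying sequence of maps: if $\nu=\chi_H$ is the characteristic function of a half-plane and $f_{1,n}^{-1}$ differs from $f_1^{-1}$ by a translation of size $1/n$, then $\|\nu\circ f_{1,n}^{-1}-\nu\circ f_1^{-1}\|_\infty=1$ for every $n$; and the factor $\overline{\partial_z f_{1,n}}/\partial_z f_{1,n}$ converges only in $L^p_{\mathrm{loc}}$, not essentially uniformly. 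So the hypothesis of Stage 1 is simply not available for the outer Beltrami coefficients, and Lemma \ref{Cota2_prel}, which only bounds $|\zeta|$ by $|f(\zeta)|$, does not touch this. The standard way to close the argument avoids the factorization altogether: the maps $f^{\mu_n}$ form a normal family of $K$-quasiconformal self-maps of $\C P^1$ fixing $0,1,\infty$ (with $K=(1+k)/(1-k)$), hence every subsequence has a locally uniformly convergent sub-subsequence; any such limit is $K$-quasiconformal, fixes $0,1,\infty$, and by the convergence theorem for complex dilatations (locally uniform convergence of quasiconformal maps together with a.e.\ convergence of their dilatations identifies the dilatation of the limit) its Beltrami coefficient is $\mu$; uniqueness then forces every subsequential limit to be $f^\mu$, so the whole sequence converges. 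You should replace Stage 2 by this compactness argument, or at minimum restate the convergence hypothesis you actually propagate through the factorization.
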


\section{Solenoidal Beltrami differentials}

A Beltrami differential is an $L_\infty$ section of the complex line bundle $\bar{\omega}\otimes \omega^{*}$ over $\C$ with essential supremum norm less than one. Here $\omega$ denotes the canonical line bundle. It has the expression $\mu\ d\bar{z}\otimes \partial_z$ where $\mu\in L_\infty(\C)_1$, the $L_\infty(\C)$ unit ball. It can be seen as an element in the unit ball $L_\infty(\C)_1$ which transforms in the following way:
$$\gamma^{*}\mu= \mu\circ \gamma\ \frac{\overline{\gamma_z}}{\gamma_z}$$
where $\gamma$ is a holomorphic map. We denote the unit ball in the space of $L_\infty$ sections of the complex line bundle $\bar{\omega}\otimes \omega^{*}$ over $\C$ simply as $L_\infty(\C)_1\ d\bar{z}\otimes \partial_z$. At first sight, it may seem pedantic and useless to explicitly write the differential part, actually it is commonly assumed as an abuse of notation. However, in the solenoid context we define everything in terms of the projections of the inverse system and the exponential map so it will be important, in order to keep track of the different phase contributions that will appear, to be explicit about the differentials.

We seek for an analog of Beltrami differential in the algebraic solenoid $\C^{*}_\Q$ looking forward to solve an analog of the respective Beltrami equation. Therefore, the natural way to define a Beltrami differential in the solenoid context is to go backwards; i.e. First define the Beltrami equation on the algebraic solenoid, then define what we mean by a solution of it and finally impose in the definition the obvious necessary conditions to have a solution of the respective Beltrami equation.

In what follows, $T_w$ denotes the translation map $z\mapsto z+w$ in $\C$. Consider the exponential map $\exp: \hat{\Z}\times \C\rightarrow \C^{*}_\Q$ and consider a function:
$$\mu:\hat{\Z}\rightarrow L_\infty(\C)_1\ d\bar{z}\otimes \partial_z$$
such that $\mu(a+n)= T_{2\pi n}^{*}\left(\mu(a)\right)$. The \textsf{solenoidal Beltrami equation} associated to $\mu$ is a family of Beltrami equations:
$$\partial_{\bar{z}}f_a= \mu(a)\partial_z f_a$$
in the distributional sense. A \textsf{quasiconformal solution of the solenoidal Beltrami equation} is a continuous leaf preserving map $f: \hat{\Z}\times \C\rightarrow \hat{\Z}\times \C$ which descends by the exponential map to a continuous map $\hat{f}$ from the algebraic solenoid to itself and the map $z\mapsto f(a,z)$ is a quasiconformal solution of the Beltrami equation $\partial_{\bar{z}}f_a= \mu(a)\partial_z f_a$ for every adelic integer $a$. In particular, $f$ is a homeomorphism that is quasiconformal on every leaf and it descends to a leaf preserving homeomorphism $\hat{f}$ from the algebraic solenoid to itself.

A solution is determined by its restriction to the baseleaf which is a quasiconformal map $f_0$ whose expression is $f_0(z)= z+g(z)$ such that $g$ is a limit periodic respect to $x$ continuous map.

If there is a solution of the solenoidal Beltrami equation, then $\mu$ must be a continuous function on $\Z$ for $\mu(a)= \partial_{\bar{z}}f_a/\partial_z f_a$ and $f$ is continuous respect to the $a$ variable. We finally arrive at the definition:

\begin{defi}
A \textsf{solenoidal Beltrami differential} is a continuous function:
$$\mu:\hat{\Z}\rightarrow L_\infty(\C)_1\ d\bar{z}\otimes \partial_z$$
such that $\mu(a+n)= T_{2\pi n}^{*}\left(\mu(a)\right)$ for every integer $n$ and adelic integer $a$.
\end{defi}

A Beltrami differential $\mu$ is \textsf{limit periodic} if for every $\varepsilon>0$ there is an integer $N$ such that:
$$||T_{2\pi Nn}^{*}\left(\mu\right)- \mu||_\infty<\varepsilon$$
for every integer $n$. A solenoidal Beltrami differential $\hat{\mu}$ is limit periodic if $\hat{\mu}(a)$ is limit periodic for every adelic integer $a$.  Because of the structural condition, every solenoidal Beltrami differential is limit periodic and a limit periodic Beltrami differential on the baseleaf determines a unique solenoidal Beltrami differential.

Analogous definitions for the periodic case: A Beltrami differential $\mu$ is $2\pi n$--\textsf{periodic} if $T_{2\pi n}^{*}\left(\mu\right)= \mu$. A solenoidal Beltrami differential $\hat{\mu}$ is $2\pi n$--periodic if $\hat{\mu}(a)$ is $2\pi n$--periodic for every adelic integer $a$.

Trivially, solenoidal Beltrami equations associated to periodic solenoidal Beltrami differentials have solution. In effect, consider a $2\pi n$--periodic solenoidal Beltrami differential $\mu$. There is a $\mu_n$ Beltrami differential such that $\mu= \exp^{*}\circ\pi_n^{*}\left(\mu_n\right)$ where $\pi_n$ is the canonical projection of the algebraic solenoid as an inverse limit. Consider a quasiconformal normal solution $f_n$ of the Beltrami equation associated to $\mu_n$ and define $\hat{f}_n$ and $F_n$ as the unique leaf preserving maps such that the following diagram commutes:

$$\xymatrix{ 	\hat{\Z}\times \C \ar[rr]^{F_n} \ar[d]_{\exp} & & \hat{\Z}\times \C \ar[d]^{\exp} \\
			\C^{*}_\Q \ar[rr]^{\hat{f}_n} \ar[d]_{\pi_n} & & \C^{*}_\Q \ar[d]^{\pi_n} \\
			\C \ar[rr]^{f_{n}}  & & \C }$$

Then, $F_n$ is a quasiconformal solution of the solenoidal Beltrami equation associated to $\mu$. The solenoidal map $\hat{f}_n$ will be called a \textsf{finite solution}: It factors through some finite level of the inverse system.

Naively, we would expect that solenoidal Beltrami equations associated to limit periodic solenoidal Beltrami differentials would also have solution. However, as the next Lemma shows, this is not the case.

\begin{lema}\label{counterexample}
Consider the continuous limit periodic Beltrami differential $\mu$ such that:
$$\mu(z)= \frac{\frac{1}{2e}\sum_{n=1}^{+\infty} \left[ \cos(x/n!) -\frac{2iy}{n!}\sin(x/n!)\right] \frac{e^{-\frac{y^{2}}{n!^{2}}}}{2n!} } {1+ \frac{1}{2e}\sum_{n=1}^{+\infty} \left[ \cos(x/n!) +\frac{2iy}{n!}\sin(x/n!)\right] \frac{e^{-\frac{y^{2}}{n!^{2}}}}{2n!} }\ d\bar{z}\otimes \partial_z$$
where $z=x+iy$. There is a unique solenoidal Beltrami differential $\hat{\mu}$ such that $\hat{\mu}(a)$ is continuous for every adelic integer $a$ and $\hat{\mu}(0)=\mu$ whose associated solenoidal Beltrami equation has no solution.
\end{lema}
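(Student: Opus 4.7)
\noindent\emph{Step 1 (existence and uniqueness of $\hat\mu$).} Writing $\mu = A/(1+B)$ with $A,B$ the two series in the lemma, each summand of $A$ or $B$ is bounded in absolute value by $(1+\sqrt{2/e})/(4e\,n!)$ (using the $1/(2n!)$ factor and the elementary bound $|2y\,e^{-y^2/n!^2}/n!|\le \sqrt{2/e}$). Hence the truncations $A_M, B_M$ (over $n\le M$) are $2\pi M!$-periodic in $x$ and converge uniformly to $A,B$ with error $O(1/(M+1)!)$; since $|B|\le(e-1)/(2e)<1$, the denominator stays bounded below, so $\mu_M:=A_M/(1+B_M)\to\mu$ uniformly. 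Thus $\mu$ is limit periodic, and the remark preceding the lemma produces the unique continuous $\hat\mu:\hat\Z\to L_\infty(\C)_1$ with $\hat\mu(0)=\mu$ and each $\hat\mu(a)$ continuous on $\C$.

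\noindent\emph{Step 2 (canonical base-leaf solution and Ahlfors--Bers reduction).} Define
\[
g(z):=\frac{1}{2e}\sum_{n\ge 1}\sin(x/n!)\,e^{-y^2/n!^2}.
\]
A direct term-by-term computation of $\partial_z$ and $\partial_{\bar z}$ matches $g_{\bar z}=A$ and $g_z=B$, so $\mu=g_{\bar z}/(1+g_z)$ with $\|\mu\|_\infty\le(e-1)/(e+1)<1$. Consequently $f(z):=z+g(z)$ is an orientation-preserving quasiconformal homeomorphism of $\C$ extending to $\C P^1$ (fixing $0$ and $\infty$, since $|g(z)|=o(|z|)$) and solves $f_{\bar z}=\mu f_z$. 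If a solenoidal quasiconformal solution $F$ existed, its base-leaf restriction $f_0(z)=z+h(z)$ would solve the same Beltrami equation with $h$ continuous and limit periodic in $x$, in particular bounded; Ahlfors--Bers uniqueness then gives $f_0=M\circ f$ for some M\"obius $M$ of $\C P^1$, and boundedness of $h$ together with $f(z)=z+o(|z|)$ force $M(w)=w+c$, so $h=g+c$. Existence of $F$ therefore reduces to $g$ being limit periodic.

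\noindent\emph{Step 3 (the main obstacle: $g$ is not limit periodic).} Suppose for contradiction that $g$ is limit periodic. Then $g$ is bounded (as a uniform limit of continuous periodic functions) and Bohr almost periodic, so the restriction $g(\cdot,0)$ is a bounded almost periodic function on $\R$ with well-defined Bohr--Fourier coefficients
\[
c(\lambda):=\lim_{T\to\infty}\frac{1}{2T}\int_{-T}^{T} g(x,0)\,e^{-i\lambda x}\,dx.
\]
Using the definition of $g$ together with the orthogonality of distinct complex exponentials under the Bohr mean (justifiable by applying the mean to the uniform periodic approximants supplied by the hypothesis), one obtains $c(\pm 1/n!)=\pm 1/(4ei)$ for every $n\ge 1$ and $c(\lambda)=0$ otherwise. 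Bohr's Parseval identity then reads
\[
\sum_{\lambda}|c(\lambda)|^2 \;=\; M(|g(\cdot,0)|^2) \;\le\; \|g(\cdot,0)\|_\infty^2 \;<\; \infty,
\]
yet $\sum_{n\ge 1} 2\,(1/(4e))^2 = +\infty$, a contradiction. Hence $g$ is not limit periodic, and by Step 2 no solenoidal solution of the Beltrami equation associated to $\hat\mu$ exists.

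\noindent The heart of the proof is Step 3: the specific form of $g$, with constant (non-decaying) amplitudes $1/(4e)$ across the infinite discrete spectrum $\{\pm 1/n!\}$, produces a non-square-summable Bohr spectrum that is incompatible with boundedness through Parseval. The main technical subtlety will be justifying the Fourier coefficient computation, which requires commuting the Bohr mean with the uniform limit of periodic approximants granted by the limit periodicity hypothesis; everything else is either elementary estimation (Step 1) or a direct appeal to Ahlfors--Bers uniqueness (Step 2).
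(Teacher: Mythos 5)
Your proposal is correct and shares the paper's overall skeleton --- exhibit the explicit candidate solution $w^{\mu}(z)=z+g(z)$ with $g(z)=\frac{1}{2e}\sum_{n}\sin(x/n!)e^{-y^{2}/n!^{2}}$, reduce existence of a solenoidal solution to $g$ being limit periodic, and then show it is not --- but your treatment of the two key steps is genuinely different and, in fact, more complete than the paper's. In Step 2 you make explicit the uniqueness reduction (any solenoidal solution restricts on the base leaf to $z+h$ with $h$ bounded and limit periodic, so by Ahlfors--Bers uniqueness $h=g+c$), which the paper leaves implicit behind the phrase ``the quasiconformal solution is the following.'' More importantly, in Step 3 the paper only asserts that $g$ ``is the pointwise limit of periodic functions but the convergence is not uniform''; taken literally this rules out one particular approximating sequence and does not by itself exclude that $g$ is a uniform limit of some other sequence of periodic functions. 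Your Bohr--Parseval argument closes exactly this gap: the spectrum $\{\pm 1/n!\}$ carries non-decaying coefficients $\pm 1/(4ei)$, so square-summability fails, which is incompatible with $g$ being bounded and almost periodic. The one point you should tighten is the computation $c(\pm 1/m!)=\pm 1/(4ei)$: appealing to ``orthogonality under the Bohr mean applied to the periodic approximants of the hypothesis'' is not quite enough, since those abstract approximants are not a priori related to the partial sums of the series, and the tail $\sum_{n>m}\sin(x/n!)$ converges only pointwise. The clean fix is a $T$-dependent truncation: for the mean over $[-T,T]$, the terms with $m<n\le N(T)$ (where $(N(T)+1)!>T^{2}$) each contribute $O(m!/T)$ by the explicit bound $|\frac{1}{2T}\int_{-T}^{T}e^{i\alpha x}dx|\le 1/(|\alpha|T)$ with $|\alpha|\ge 1/(m+1)!$, while the far tail is $O(|x|/(N(T)+1)!)=O(1/T)$ pointwise on $[-T,T]$; both vanish as $T\to\infty$. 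With that inserted, your argument is a rigorous proof --- arguably a strengthening of the one in the paper.
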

\begin{proof}
First, let's see that $\mu$ is a Beltrami differential; i.e. $||\mu||_{\infty}<1$. Define:
$$h_{\pm}(n)(z)=\frac{1}{2n!}e^{\pm ix/n!}\left(1\mp 2y/n!\right)\frac{e^{-\frac{y^{2}}{n!^{2}}}}{2}$$
where $z=x+iy$. Because $||h_{\pm}(n)||_{\infty}<1/2n!$ and the identity:
$$h_{+}(n) \pm h_{-}(n)= \left[ \cos(x/n!) \mp \frac{2iy}{n!}\sin(x/n!)\right] \frac{e^{-\frac{y^{2}}{n!^{2}}}}{2n!}$$
we have that each term of the sum has supremum norm less than $1/n!$ hence the supremum norm of the sum is less than $e-1$. We conclude that:
$$||\mu||_{\infty}< \frac{\frac{1}{2e}(e-1)}{1-\frac{1}{2e}(e-1)}=\frac{e-1}{e+1}<1/2$$

For the extension, define $\hat{\mu}(n):= T_{2\pi n}^{*}\left(\mu\right)$ for every integer $n$. Because $\mu$ is limit periodic respect to $x$, $\hat{\mu}$ uniquely extends to a continuous map on $\hat{\Z}$ such that $\hat{\mu}(a)$ is continuous for every adelic integer $a$.

The quasiconformal solution $w^{\mu}$ of the Beltrami equation associated to $\mu$ is the following:
$$w^{\mu}(z)= z+ \frac{1}{2e}\sum_{n=1}^{+\infty} \sin(x/n!)e^{-\frac{y^{2}}{n!^{2}}}$$
However, it is not of the type $z+ g(z)$ with $g$ limit periodic respect to $x$: It is the pointwise limit of periodic functions but the convergence is not uniform. In particular, its unique continuous extension to $\hat{\Z}\times\C$ verifying the structural condition does not descend to the algebraic solenoid.
\end{proof}

The previous solenoidal Beltrami differential is the pullback by the exponential of a continuous Beltrami differential on the algebraic solenoid $\C^{*}_\Q\subset \C P^{1}_\Q$ with vanishing limits at the cusps of the adelic Riemann sphere $\C P^{1}_\Q$. Actually, this is the purpose of the gaussian respect to the $y$ variable. However, even in this case there is no solution.

We seek for an additional condition on the solenoidal Beltrami differential to guarantee the existence of a solution of the associated solenoidal Beltrami equation.

\section{Statement and sketch of the proof}

\begin{teo}\label{Main}
Consider a solenoidal Beltrami differential $\mu$ with a net of periodic solenoidal Beltrami differential $(\mu_n)_{n\in\N}$ uniformly converging to it. Suppose there is a cofinal sequence $(n_i)_{i\in \N}$ in the divisibility net such that $n_i | n_{i+1}$ and the following series converges:
$$\sum_{i\in \N} n_{i+1}||\mu_{n_{i+1}}-\mu_{n_i}||_{\infty} <\infty$$

Then, there is a unique quasiconformal leaf preserving solution $\hat{f}$ from the adelic Riemann sphere to itself of the solenoidal Beltrami equation associated to $\mu$ such that $f$ fixes $0,1$ and $\infty$.
\end{teo}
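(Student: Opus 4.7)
The plan is to mimic the telescoping strategy of the toy Theorem~\ref{Diophantine_Solenoid}, but multiplicatively in the group of normalized quasiconformal maps rather than additively, since the Beltrami equation is nonlinear. For each $i$, apply the Ahlfors--Bers Theorem together with the construction preceding Lemma~\ref{counterexample} to produce the finite solenoidal solution $\hat{f}_{n_i}$ associated to $\mu_{n_i}$, normalized to fix $0,1,\infty$. Since $n_i \mid n_{i+1}$, both $\hat{f}_{n_i}$ and $\hat{f}_{n_{i+1}}$ factor through level $n_{i+1}$, so the correction
$$G_i := \hat{f}_{n_{i+1}} \circ \hat{f}_{n_i}^{-1}$$
descends at level $n_{i+1}$ to a normalized quasiconformal map $g_i$ of $\C P^{1}$. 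By the composition formula for Beltrami coefficients, its coefficient $\nu_i$ obeys
$$||\nu_i||_\infty \leq \frac{||\mu_{n_{i+1}} - \mu_{n_i}||_\infty}{1 - ||\mu_{n_i}||_\infty \, ||\mu_{n_{i+1}}||_\infty} \leq C\, ||\mu_{n_{i+1}} - \mu_{n_i}||_\infty,$$
where $C$ is uniform in $i$ because the $||\mu_{n_i}||_\infty$ are uniformly bounded away from $1$.

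The crux of the argument is the quantitative estimate
$$|g_i(z) - z| \leq C(K)\, n_{i+1}\, ||\mu_{n_{i+1}} - \mu_{n_i}||_\infty \qquad \text{for } z \in K$$
on every compact $K \subset \C$, which accounts for the weight $n_{i+1}$ in the hypothesis. The weight arises by rescaling in the spirit of Lemma~\ref{rescale}: conjugating $g_i$ by the homothety $h_{n_{i+1}^{-1}}$ yields a $2\pi$-equivariant normalized quasiconformal map with Beltrami coefficient of the same $L^\infty$ norm as $\nu_i$; applying Lemma~\ref{Cota1_Prel} after a suitable truncation to a fundamental domain gives a bound proportional to $||\nu_i||_\infty$ on the rescaled map, and undoing the rescaling introduces the factor $n_{i+1}$. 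This rescaling step is the main technical obstacle, since the periodic $\nu_i$ does not have compact support; one must combine the truncation carefully with Lemmas~\ref{Cota1_Prel}--\ref{Cota2_prel} and the three-point normalization to keep the area-dependence of the constants under uniform control as the truncation level is enlarged.

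With this estimate in hand, the iterated composition $\hat{f}_{n_k} = G_{k-1} \circ \cdots \circ G_0 \circ \hat{f}_{n_0}$ is Cauchy in the topology of uniform convergence on compact subsets of $\hat{\Z} \times \C$, because for $z \in K$ one has
$$|\hat{f}_{n_{k+1}}(z) - \hat{f}_{n_k}(z)| = |G_k(\hat{f}_{n_k}(z)) - \hat{f}_{n_k}(z)|,$$
and $\hat{f}_{n_k}(K)$ is contained in a fixed compact set by quasiconformal equicontinuity under the three-point normalization. The limit $\hat{f}$ is continuous and leaf-preserving; Proposition~\ref{Imayoshi_Taniguchi_Convergence_infty} applied leafwise, together with $\mu_{n_i} \to \mu$ in $L^\infty$, shows that $\hat{f}$ is leafwise quasiconformal and solves the solenoidal Beltrami equation associated to $\mu$. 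Lemma~\ref{Cota2_prel} applied to the inverses yields a uniform lower bound and injectivity of the limit, so $\hat{f}$ is a homeomorphism; it extends continuously across the two cusps of the adelic Riemann sphere and fixes $0,1,\infty$. Uniqueness propagates from the leafwise uniqueness of the normalized Ahlfors--Bers solution.
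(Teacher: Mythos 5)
Your overall architecture (finite solenoidal solutions, telescoping corrections $G_i$ with small Beltrami coefficients, a weighted Cauchy estimate, leafwise identification of the limit) matches the paper's sketch, but the one step you yourself flag as ``the main technical obstacle'' is a genuine gap, and it is precisely the step that justifies the weighted hypothesis. Deriving $|g_i(z)-z|\leq C(K)\,n_{i+1}\,\|\mu_{n_{i+1}}-\mu_{n_i}\|_\infty$ by conjugating with the homothety $h_{n_{i+1}^{-1}}$ in the additive coordinate and truncating the periodic coefficient to a fundamental domain does not work as stated: the constants $A$ and $B$ of Lemmas \ref{Cota1_Prel} and \ref{Cota2_prel} are strictly increasing in the area of the support of the coefficient, a fundamental domain of the $2\pi n_{i+1}$-periodic lift is a strip whose truncations have area growing without bound, and so enlarging the truncation level destroys exactly the uniformity in $i$ that the telescoping sum needs. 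Moreover, the truncated map no longer coincides with $g_i$ where the coefficient has been cut off, so an additional comparison estimate would be required and is nowhere supplied. The factor $n_{i+1}$ is therefore asserted, not obtained.

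The paper produces this factor by a different, genuinely multiplicative mechanism. For $\mu$ with compact support in $\C_\Q$, all the pushed-down coefficients $\mu_{n,m}^{\uparrow L}$ have support inside the \emph{fixed} compact set $\pi_1(\mathrm{supp}(\mu))\cup\overline{D(0;1)}$, so Lemmas \ref{Cota1_Prel}--\ref{Cota2_prel} apply with constants independent of $i$ and $L$; the difference $\pi_L\circ\hat f_{n_{i+1}}-\pi_L\circ\hat f_{n_i}$ is then estimated at a fixed level $L$ by writing $\pi_L=(\pi_{n_{i+1}})^{n_{i+1}/L}$ and applying the mean value theorem to the power map $w\mapsto w^{n_{i+1}/L}$, whose derivative is the source of the weight $n_{i+1}/L$ (Lemma \ref{TheFormula}); the a priori bounds of Lemma \ref{Bound1_aux} and Corollary \ref{Bound_ML}, which themselves consume the convergence of the weighted series, control the point at which that derivative is evaluated. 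Two further items are missing from your proposal: the reduction to compact support (the paper's last step, via the inversion $z\mapsto z^{-1}$ and the factorization $f=f_2\circ f_1$), which you cannot avoid since Lemmas \ref{Cota1_Prel}--\ref{Cota2_prel} require it; and the leafwise normalization issue --- the restriction of $\hat f_{n_i}$ to a leaf is not three-point normalized, so Proposition \ref{Imayoshi_Taniguchi_Convergence_infty} can only be invoked after composing with affine corrections $A_i$ and verifying that the limiting coefficient $a$ is nonzero (Remark \ref{TradeOff}); for the same reason uniqueness does not simply ``propagate from leafwise uniqueness'' but requires Weyl's lemma together with the limit-periodicity constraint.
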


We give a sketch of the proof. Although the details and estimations are rather technical and tedious, the idea of the proof is quite simple:
\begin{enumerate}
\item First consider a solenoidal Beltrami differential with compact support and consider the finite solutions $\hat{f}_n$ defined before.

\item For every natural $L$, the sequence $(\pi_{L}\circ\hat{f}_{n_{i}})_{i\in\N}$ is a uniform Cauchy sequence on compact sets of $\C_\Q$. In particular, there is a continuous function $g_{L}:\C_{\Q}\rightarrow \C$ such that the previous sequence converges to it on compact sets. By continuity, we have the relation $z^{L'/L}\circ g_{L'}= g_{L}$ for every $L'$ such that $L|L'$. By the universal property of inverse limits, there is a unique continuous leaf preserving function $\hat{f}:\C_{\Q}\rightarrow \C_{\Q}$ such that $\pi_{L}\circ \hat{f}= g_{L}$ for every natural $L$. By construction, the sequence $(\pi_{L}\circ\hat{f}_{n_{i}})_{i\in\N}$ converges locally uniformly to $\pi_{L}\circ \hat{f}$ for every natural $L$ hence the sequence $\left(\hat{f}_{n_{i}}\right)_{i\in \N}$ converges locally uniformly to $\hat{f}$.\label{step2}

\item The continuous map $\hat{f}$ is proper hence we get a continuous extension to the adelic Riemann sphere just defining $\hat{f}(\infty)=\infty$. On the adelic Riemann sphere, the sequence $\left(\hat{f}_{n_{i}}\right)_{i\in \N}$ converges pointwise to $\hat{f}$.\label{step3}

\item By standard arguments, we show that $\hat{f}$ is quasiconformal on every leaf and verifies the solenoidal Beltrami equation.

\item Finally, although tedious and lengthy, we remove the compact support hypothesis by a straightforward adaptation of the standard argument.

\end{enumerate}

Steps \ref{step2} and \ref{step3} are the heart of the proof. Here lies the real difficulty and the estimates are delicate and not trivial.

We can write the additional condition as the convergence of the following norm:
$$||\mu||_{\mathcal{S}}:= n_1||\mu_{n_1}||_{\infty} + \sum_{i\in \N} n_{i+1}||\mu_{n_{i+1}}-\mu_{n_i}||_{\infty}$$
where $\mathcal{S}$ denotes the cofinal sequence $(n_i)_{i\in \N}$. Because this new norm dominates the original $L_\infty$ one, the subspace of those differentials whose new norm is convergent is a Banach space with this norm and is the closure of the space of periodic differentials under this norm.

\section{The proof}

We will find a solution by approximating finite ones. As we explained before, the ramification points of the sphere must be fixed and this is no longer a choice but a topological constraint of the theory. Therefore we will consider normal quasiconformal solutions.

Consider a solenoidal Beltrami differential $\hat{\mu}$ with compact support and a net of periodic solenoidal Beltrami differentials $(\hat{\mu}_n)_{n\in\N}$ uniformly converging to it. Denote by $\mu$ and $\mathcal{I}_{n}(\mu)$ the pushouts of $\hat{\mu}$ and $\hat{\mu}_n$ respectively by the exponential map to the algebraic solenoid.

There are plenty of ways to construct this converging net. As an example, consider the following one: For every natural $n$, define the $2\pi n$--periodic Beltrami differential $\tilde{\mu}_{n}$ such that it equals $\hat{\mu}(0)$ on $[0, 2\pi n)\times \R$ and then extend it by the periodicity condition. There is a unique solenoidal Beltrami differential $\hat{\mu}_n$ such that $\hat{\mu}_n(0)=\tilde{\mu}_n$. Because $\hat{\mu}$ is limit periodic, the divisibility net $\left(\hat{\mu}_n\right)_{n\in\N}$ converges uniformly to $\hat{\mu}$.

For every integer $n$, define the Beltrami differential $\mu_n$ such that $\mathcal{I}_{n}(\mu)= \pi_{n}^{*}(\mu_{n})$. Consider the quasiconformal normal solution $f_{n}:\C\rightarrow\C$ of the $\mu_{n}$-Beltrami equation; i.e. $f_n(0)=0$ and $(f_n)_{z}-1\in L_{p}(\C)$, $p>2$. If $n|L$, define the map $f_{n}^{\uparrow L}$ and the leaf preserving solenoidal map $\hat{f}_{n}$ such that:

$$\xymatrix{ 	\C^{*}_\Q \ar[rr]^{\hat{f}_{n}} \ar[d]_{\pi_{L}} & & \C^{*}_\Q \ar[d]^{\pi_{L}} \\
			\C \ar[rr]^{f_{n}^{\uparrow L}} \ar[d]_{z^{L/n}} & & \C \ar[d]^{z^{L/n}} \\
			\C \ar[rr]^{f_{n}}  & & \C }$$

Define $\mu_{n}^{\uparrow L}:=(z^{L/n})^{*}(\mu_{n})$. The map $f_{n}^{\uparrow L}$ is a quasiconformal normal solution of the Beltrami equation associated to $\mu_{n}^{\uparrow L}$.

If $m|n|L$ define $f_{n,m}^{\uparrow L}= f_{n}^{\uparrow L}\circ (f_{m}^{\uparrow L})^{-1}$. See that $\hat{f}_{n,m}= \hat{f}_{n}\circ \hat{f}_{m}^{-1}$. The quasiconformal normal map $f_{n,m}^{\uparrow L}$ is the solution of the $\mu_{n,m}^{\uparrow L}$-Beltrami equation such that:
$$f_{m}^{*}(\mu_{n,m}^{\uparrow L})= \frac{\mu_{n}^{\uparrow L}- \mu_{m}^{\uparrow L}}{1- \mu_{n}^{\uparrow L}\overline{\mu_{m}^{\uparrow L}}}\ \overline{dz}\otimes \partial_z$$
where $\mu_{n}^{\uparrow L}$ and $\mu_{m}^{\uparrow L}$ on the right side denote the $L_\infty$ classes, the coefficients, and not the differentials.

\begin{lema}\label{BoundsLemma}
We have the following bounds:
\begin{enumerate}
\item $||\mu_{n}^{\uparrow L}||_{\infty} = ||\mathcal{I}_{n}(\mu)||_{\infty}$
\item $||\mu_{n,m}^{\uparrow L}||_{\infty} \leq ||\mathcal{I}_{n}(\mu) - \mathcal{I}_{m}(\mu)||_{\infty}\left(1-||\mathcal{I}_{n}(\mu)||_{\infty}||\mathcal{I}_{m}(\mu)||_{\infty}\right)^{-1}$
\end{enumerate}
and these are independent of $L$.
\end{lema}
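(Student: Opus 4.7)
The plan is to reduce both bounds to two standard facts: $(i)$ holomorphic pullbacks act on a Beltrami coefficient by multiplication by a unit-modulus phase, so they preserve its essential supremum pointwise; and $(ii)$ the classical composition formula for the Beltrami coefficient of $g\circ f^{-1}$, already recorded in the excerpt just before the statement.

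\emph{Part $(1)$.} The three-layer commutative diagram defining $\hat{f}_n$ and $f_n^{\uparrow L}$ forces $\pi_n = z^{L/n}\circ \pi_L$ whenever $n\mid L$, so that
$$\mathcal{I}_n(\mu) = \pi_n^{*}(\mu_n) = \pi_L^{*}\bigl((z^{L/n})^{*}(\mu_n)\bigr) = \pi_L^{*}(\mu_n^{\uparrow L}).$$
Both $z^{L/n}$ and the leafwise projection $\pi_L$ are holomorphic with non-vanishing derivative (away from a null set of ramification points), hence by $(i)$ and the transformation law $\gamma^{*}\mu = (\mu\circ\gamma)\,\overline{\gamma_z}/\gamma_z$ the modulus of the coefficient is unchanged pointwise. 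This gives $||\mu_n^{\uparrow L}||_\infty = ||\mu_n||_\infty = ||\mathcal{I}_n(\mu)||_\infty$, a quantity that manifestly does not involve $L$.

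\emph{Part $(2)$.} From $f_{n,m}^{\uparrow L} = f_n^{\uparrow L}\circ (f_m^{\uparrow L})^{-1}$ and the composition formula written in the excerpt, at every point $z$ the coefficient of $\mu_{n,m}^{\uparrow L}$ at $w = f_m^{\uparrow L}(z)$ equals a unit-modulus phase coming from $f_m^{\uparrow L}$ times
$$\frac{\mu_n^{\uparrow L}(z)-\mu_m^{\uparrow L}(z)}{1-\overline{\mu_m^{\uparrow L}(z)}\,\mu_n^{\uparrow L}(z)}.$$
Since $f_m^{\uparrow L}$ is a bijection, taking essential suprema and using $|1-\alpha\bar\beta|\geq 1-|\alpha||\beta|$ for $|\alpha|,|\beta|<1$ yields
$$||\mu_{n,m}^{\uparrow L}||_\infty \leq \frac{||\mu_n^{\uparrow L}-\mu_m^{\uparrow L}||_\infty}{1-||\mu_n^{\uparrow L}||_\infty\,||\mu_m^{\uparrow L}||_\infty}.$$
Applying the pullback argument of Part $(1)$ to the linear combination, $\mathcal{I}_n(\mu)-\mathcal{I}_m(\mu) = \pi_L^{*}\bigl(\mu_n^{\uparrow L}-\mu_m^{\uparrow L}\bigr)$ and so $||\mu_n^{\uparrow L}-\mu_m^{\uparrow L}||_\infty = ||\mathcal{I}_n(\mu)-\mathcal{I}_m(\mu)||_\infty$. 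Substituting this in the numerator and the identity from $(1)$ in the denominator produces the announced estimate, whose right-hand side only depends on $\mathcal{I}_n(\mu)$ and $\mathcal{I}_m(\mu)$ and is therefore independent of $L$.

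\emph{Main obstacle.} The delicate point is purely bookkeeping: one must keep track of the phase factors $\overline{\gamma_z}/\gamma_z$ generated by the various projections $\pi_L$ and $z^{L/n}$ and by the quasiconformal change of variable $f_m^{\uparrow L}$, and check that each such phase has unit modulus so that the $L$-dependence disappears uniformly when essential suprema are taken. Once this cancellation is verified, both inequalities follow from the classical Beltrami composition identity and the elementary algebraic estimate above.
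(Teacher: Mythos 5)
Your proposal is correct and follows essentially the same route as the paper: part $(1)$ is the observation that holomorphic pullbacks (by $z^{L/n}$ and $\pi_L$) only multiply the coefficient by a unit-modulus phase and hence preserve the essential supremum, and part $(2)$ combines the classical composition formula for $\mu_{n,m}^{\uparrow L}$ with the elementary bound $|1-\alpha\bar\beta|\geq 1-|\alpha||\beta|$ and the same pullback identity $\pi_L^{*}(\mu_n^{\uparrow L}-\mu_m^{\uparrow L})=\mathcal{I}_n(\mu)-\mathcal{I}_m(\mu)$ for the numerator. The paper's proof is exactly this chain of equalities, so no further comparison is needed.
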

\begin{proof}
\begin{enumerate}
\item This is immediate from the definition: $||\mu_{n}^{\uparrow L}||_{\infty}=||\mu_{n}||_\infty =  ||\mathcal{I}_{n}(\mu)||_{\infty}$ for every natural $L$ such that $n|L$.

\item We also have:
\begin{equation}\label{ineq1}
||\mu_{n,m}^{\uparrow L}||_{\infty} \leq \frac{||\mu_{n}^{\uparrow L}- \mu_{m}^{\uparrow L}||_{\infty}}{1- ||\mu_{n}^{\uparrow L}||_{\infty}||\mu_{m}^{\uparrow L}||_{\infty}} = \frac{||\mathcal{I}_{n}(\mu) - \mathcal{I}_{m}(\mu)||_{\infty}}{1-||\mathcal{I}_{n}(\mu)||_{\infty}||\mathcal{I}_{m}(\mu)||_{\infty}}
\end{equation}
for every $m|n|L$, where the last step follows from the following calculation:
\begin{eqnarray*}
\parallel \mu_{n}^{\uparrow L}- \mu_{m}^{\uparrow L}\parallel_{\infty} &=& \parallel \pi_{L}^{*}( \mu_{n}^{\uparrow L}- \mu_{m}^{\uparrow L})\parallel_{\infty} \\
&=& \parallel \pi_{L}^{*}((z^{L/n})^{*}\mu_{n}- (z^{L/m})^{*}\mu_{m})\parallel_{\infty} \\
&=& \parallel \pi_{L}^{*}((z^{L/n})^{*}\mu_{n})- \pi_{L}^{*}((z^{L/m})^{*}\mu_{m})\parallel_{\infty}  \\
&=& \parallel \pi_{n}^{*}(\mu_{n})- \pi_{m}^{*}(\mu_{m})\parallel_{\infty} \\
&=& \parallel \mathcal{I}_{n}(\mu)- \mathcal{I}_{m}(\mu) \parallel_{\infty}
\end{eqnarray*}
\end{enumerate}
\end{proof}

From now on, we will also assume the following \textsf{additional condition}: There is a cofinal sequence $(n_{i})_{i\in\N}$ in the divisibility net such that $n_i |n_{i+1}$ for every natural $i$ and the following series converges:
$$\sum_{j=1}^{\infty} n_{j+1}\ ||\mathcal{I}_{n_{j+1}}(\mu) - \mathcal{I}_{n_{j}}(\mu)||_{\infty} < \infty$$

Choose a real number $k$ such that $||\mu||_\infty < k<1$. Because of Lemma \ref{BoundsLemma}, by relabeling the sequence if necessary, we may suppose that:
$$||\mu_{n_i}^{\uparrow n_j}||_{\infty},\ ||\mu_{n_{i},n_{i-1}}^{\uparrow n_j}||_{\infty}<k$$
for every pair of natural numbers $i,j$ such that $i\leq j$. In particular, there is a $p>2$ such that the normalization in the statement of Theorem \ref{normal_solution} holds for all of the normal solutions of the Beltrami equations associated to these Beltrami differentials.

\begin{lema}\label{Bound1_aux}
Define $L=n_{J}$. There are constants $A$ and $A'$ depending only on $\mu$, $k$ and $p$ such that:
\begin{itemize}
\item If $i\leq J$ then
$$|\pi_{L}\circ\hat{f}_{n_{i}}(x)| \leq \left( 1+ A||\mathcal{I}_{n_{i}}(\mu)||_{\infty} \right)\ max\{\ 1,\ |\pi_{L}(x)|\ \}$$

$$|\pi_{L}\circ\hat{f}_{n_{i}}(x)| \leq \left( 1+ A'||\mathcal{I}_{n_{i}}(\mu)- \mathcal{I}_{n_{i-1}}(\mu)||_{\infty} \right)\ max\{\ 1,\ |\pi_{L}\circ\hat{f}_{n_{i-1}}(x)|\ \}$$

\item If $i>J$ then
$$|\pi_{L}\circ\hat{f}_{n_{i}}(x)|\leq (1+ A||\mathcal{I}_{L}(\mu)||_{\infty})e^{ \frac{A'}{L}\ \sum_{j=J}^{i-1} n_{j+1}\ ||\mathcal{I}_{n_{j+1}}(\mu)- \mathcal{I}_{n_{j}}(\mu)||_{\infty}}\ max\{\ 1,\ |\pi_{L}(x)|\ \}$$
\end{itemize}
\end{lema}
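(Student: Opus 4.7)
The plan is to apply Lemma~\ref{Cota1_Prel} at whichever level each map naturally lives on and then transfer between levels via the covering $z\mapsto z^{n/L}$. Throughout, the constants $A,A'$ originate from Lemma~\ref{Cota1_Prel}; since that constant is strictly increasing in the area of the support and $\hat{\mu}$ has compact support in $\C^{*}_{\Q}$, the projections and power-map pullbacks of $\mathrm{supp}(\hat{\mu})$ at every level concentrate in annuli around $|z|=1$ whose area is uniformly bounded (one checks that if $\mathrm{supp}(\hat{\mu})\subset\pi_1^{-1}(\{r\le|w|\le R\})$, then $\pi_n(\mathrm{supp}(\hat{\mu}))$ lives in an annulus of area $O(1/n)$, and the subsequent pullback by $z^{L/n}$ does not increase it). This ensures $A,A'$ can be chosen independent of $i,j,L$.

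For the case $i\leq J$ (so $n_i\mid L$), the diagram yields $\pi_L\circ\hat f_{n_i}(x)=f_{n_i}^{\uparrow L}(\pi_L(x))$. The first bound follows by applying Lemma~\ref{Cota1_Prel} to the normal solution $f_{n_i}^{\uparrow L}$, whose sup norm is $\|\mathcal{I}_{n_i}(\mu)\|_\infty$ by Lemma~\ref{BoundsLemma}(1), and splitting the cases $|\pi_L(x)|\leq 1$ and $|\pi_L(x)|\geq 1$ to package the factor $|\zeta|^{1-2/p}$ and the linear term into $\max\{1,|\pi_L(x)|\}$. The second bound follows from the same case split applied instead to $f_{n_i,n_{i-1}}^{\uparrow L}$ in the factorisation $f_{n_i}^{\uparrow L}=f_{n_i,n_{i-1}}^{\uparrow L}\circ f_{n_{i-1}}^{\uparrow L}$, using Lemma~\ref{BoundsLemma}(2) and the bound $1-\|\mathcal{I}_{n_i}(\mu)\|_\infty\|\mathcal{I}_{n_{i-1}}(\mu)\|_\infty\geq 1-k^2$ to absorb the denominator into $A'=A/(1-k^2)$.

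For $i>J$, where $\hat f_{n_i}$ does not descend to level $L$, I would decompose on the solenoid as $\hat f_{n_i}=\hat f_{n_i,n_{i-1}}\circ\cdots\circ\hat f_{n_{J+1},n_J}\circ\hat f_{n_J}$ and handle each telescoping factor $\hat f_{n_{j+1},n_j}$ at its natural level $n_{j+1}$ (which is divisible by $L$). Using $\pi_L=(z^{n_{j+1}/L})\circ\pi_{n_{j+1}}$ one gets
\[ |\pi_L\circ\hat f_{n_{j+1},n_j}(y)|=|f_{n_{j+1},n_j}^{\uparrow n_{j+1}}(\pi_{n_{j+1}}(y))|^{n_{j+1}/L}, \]
and applying the second bound at level $n_{j+1}$ together with $(1+c)^{k}\leq e^{kc}$ for $c,k\geq 0$ yields
\[ |\pi_L\circ\hat f_{n_{j+1},n_j}(y)|\leq e^{(n_{j+1}/L)\,A'\,\|\mathcal{I}_{n_{j+1}}(\mu)-\mathcal{I}_{n_j}(\mu)\|_\infty}\max\{1,|\pi_L(y)|\}. \]
This is precisely the arithmetic origin of the weight $n_{j+1}/L$ in the statement. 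Iterating along the chain, merging the exponentials into a single sum, and controlling the base term $\max\{1,|\pi_L\circ\hat f_{n_J}(x)|\}$ by the first inequality applied at $i=J$ (where $n_J=L$), delivers the third estimate.

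The main obstacle is the third step: no single level of the inverse system accommodates all the $n_i$, forcing a strictly level-by-level argument, and it is the elementary inequality $(1+c)^{n_{j+1}/L}\leq e^{(n_{j+1}/L)c}$ that produces the weighting $n_{j+1}/L$ which in turn makes the additional hypothesis of Theorem~\ref{Main} the natural convergence condition. The secondary difficulty is the uniformity of $A'$ across the levels $n_{j+1}$, which is handled by the compact support hypothesis on $\hat{\mu}$ as explained at the outset.
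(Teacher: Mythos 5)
Your proposal is correct and takes essentially the same route as the paper's own proof: the uniform bound on the areas of the supports of $\mu_{n_i}^{\uparrow L}$ and $\mu_{n_i,n_{i-1}}^{\uparrow L}$ to fix $A$ and $A'=A/(1-k^{2})$, the direct application of Lemma \ref{Cota1_Prel} together with Lemma \ref{BoundsLemma} for $i\leq J$, and for $i>J$ the telescoping decomposition $\hat f_{n_i}=\hat f_{n_i,n_{i-1}}\circ\cdots\circ\hat f_{n_{J+1},n_J}\circ\hat f_{n_J}$ with each factor estimated at level $n_{j+1}$ and transported to level $L$ via the $n_{j+1}/L$-th power and $(1+c)^{m}\leq e^{mc}$. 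No changes needed.
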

\begin{proof}
Because the supports of all $\mu_{n_{i}}^{\uparrow L}$ and $\mu_{n_{i},n_{i-1}}^{\uparrow L}$ are uniformly bounded:
$$supp\big(\mu_{n_{i}}^{\uparrow L}\big),\ supp\big(\mu_{n_{i},n_{i-1}}^{\uparrow L}\big)\subset \pi_{1}\big(support(\mu)\big)\cup \overline{D(0;1)}$$
by Lemma \ref{Cota1_Prel} we can take the same constant $A$ for all the maps $f_{n_{i}}^{\uparrow L}$ and $f_{n_{i},n_{i-1}}^{\uparrow L}$. Then we have:
\begin{eqnarray*}
|\pi_{L}\circ\hat{f}_{n_{i}}(x)| &=& |f_{n_{i}}^{\uparrow L}(\pi_{L}(x))| \\
&\leq& A\ ||\mu_{n_{i}}^{\uparrow L}||_{\infty}\ |\pi_{L}(x)|^{1-2/p} + |\pi_{L}(x)| \\
&\leq& A\ ||\mathcal{I}_{n_{i}}(\mu)||_{\infty}\ |\pi_{L}(x)|^{1-2/p} + |\pi_{L}(x)| \\
&\leq& \left( 1+ A||\mathcal{I}_{n_{i}}(\mu)||_{\infty} \right)\ max\{\ 1,\ |\pi_{L}(x)|\ \}
\end{eqnarray*}
In particular,
\begin{equation}\label{aux_eq_1}
|\pi_{L}\circ\hat{f}_{L}(x)| \leq (1+ A||\mathcal{I}_{L}(\mu)||_{\infty})\ max\{\ 1,\ |\pi_{L}(x)|\ \}
\end{equation}

For the second assertion:
\begin{eqnarray*}
|\pi_{L}\circ\hat{f}_{n_{i},n_{i-1}}(x)| &=& |f_{n_{i},n_{i-1}}^{\uparrow L}(\pi_{L}(x))| \\
&\leq& A'\ ||\mathcal{I}_{n_{i}}(\mu) - \mathcal{I}_{n_{i-1}}(\mu)||_{\infty}\ |\pi_{L}(x)|^{1-2/p} + |\pi_{L}(x)| \\
&\leq& (1+ A'\ ||\mathcal{I}_{n_{i}}(\mu) - \mathcal{I}_{n_{i-1}}(\mu)||_{\infty})\ max\{\ 1,\ |\pi_{L}(x)|\ \}
\end{eqnarray*}
where $A'= A/(1-k^{2})$. Because $\hat{f}_{n_{i}}= \hat{f}_{n_{i},n_{i-1}}\circ \hat{f}_{n_{i-1}}$ the result follows.

Finally, for the third assertion we have:
\begin{eqnarray*}
|\pi_{n_{j}}\circ\hat{f}_{n_{j},n_{j-1}}(x)| &=& |f_{n_{j},n_{j-1}}(\pi_{n_{j}}(x))| \\
&\leq& A'\ ||\mathcal{I}_{n_{j}}(\mu) - \mathcal{I}_{n_{j-1}}(\mu)||_{\infty}\ |\pi_{n_{j}}(x)|^{1-2/p} + |\pi_{n_{j}}(x)| \\
&\leq& \left( \frac{A'a_{n_{j}}}{n_{j}}+1 \right)\ max\{\ 1,\ |\pi_{n_{j}}(x)|\ \}
\end{eqnarray*}
where $a_{n_{j}}= n_{j}\ ||\mathcal{I}_{n_{j}}(\mu) - \mathcal{I}_{n_{j-1}}(\mu)||_{\infty}$.
Because $\pi_{n_{j}}^{n_{j}/L}= \pi_{L}$ we have:
\begin{eqnarray*}
|\pi_{L}\circ\hat{f}_{n_{j},n_{j-1}}(x)| &=& |\pi_{n_{j}}\circ\hat{f}_{n_{j},n_{j-1}}(x)|^{n_{j}/L}\\
&\leq& \left( \frac{A'a_{n_{j}}}{n_{j}}+1 \right)^{n_{j}/L}\ max\{\ 1,\ |\pi_{n_{j}}(x)|\ \}^{n_{j}/L} \\
&\leq& e^{\frac{A'}{L}a_{n_{j}}}   \ max\{\ 1,\ |\pi_{L}(x)|\ \}
\end{eqnarray*}
In particular, because the right hand side of the above equation is greater than or equal to one, then:
\begin{equation}\label{aux_eq_2}
max\{\ 1,\ |\pi_{L}\circ\hat{f}_{n_{j},n_{j-1}}(x)|\ \} \leq e^{\frac{A'}{L}a_{n_{j}}}   \ max\{\ 1,\ |\pi_{L}(x)|\ \}
\end{equation}
and by the same argument, relation \eqref{aux_eq_1} implies:

\begin{equation}\label{aux_eq_3}
max\{\ 1,\ |\pi_{L}\circ\hat{f}_{L}(x)|\ \} \leq (1+ A||\mathcal{I}_{L}(\mu)||_{\infty})\ max\{\ 1,\ |\pi_{L}(x)|\ \}
\end{equation}

\noi Because $$\hat{f}_{n_{i}}= \hat{f}_{n_{i},n_{i-1}}\circ\hat{f}_{n_{i-1},n_{i-2}}\ldots \circ\hat{f}_{n_{J+1},n_{J}}\circ\hat{f}_{L}$$
induction on relation \eqref{aux_eq_2} and relation \eqref{aux_eq_3} imply:
$$|\pi_{L}\circ\hat{f}_{n_{i}}(x)| \leq max\{\ 1,\ |\pi_{L}\circ\hat{f}_{n_{i}}(x)|\ \} \leq (1+ A||\mathcal{I}_{L}(\mu)||_{\infty})e^{ \frac{A'}{L}\ \sum_{j=J}^{i-1}a_{n_{j+1}}}\ max\{\ 1,\ |\pi_{L}(x)|\ \}$$
and the result follows.
\end{proof}

\begin{cor}\label{Bound_ML}
Define $L=n_{J}$. There are constants $A$ and $A'$ depending only on $\mu$, $k$ and $p$ such that for every natural $i\geq J$:
$$|\pi_{L}\circ\hat{f}_{n_{i}}(x)|\leq (1+ A||\mathcal{I}_{L}(\mu)||_{\infty})e^{ \frac{A'}{L}\ \sum_{j=J}^{\infty} n_{j+1}\ ||\mathcal{I}_{n_{j+1}}(\mu)- \mathcal{I}_{n_{j}}(\mu)||_{\infty}}\ max\{\ 1,\ |\pi_{L}(x)|\ \}$$
\end{cor}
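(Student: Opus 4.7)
The plan is to apply the third estimate of Lemma \ref{Bound1_aux} and absorb the dependence on $i$ by bounding the partial sum by the full series. Since every term $n_{j+1}\,||\mathcal{I}_{n_{j+1}}(\mu)-\mathcal{I}_{n_{j}}(\mu)||_\infty$ is nonnegative, the partial sums
$$S_i := \sum_{j=J}^{i-1} n_{j+1}\,||\mathcal{I}_{n_{j+1}}(\mu)-\mathcal{I}_{n_{j}}(\mu)||_\infty$$
form an increasing sequence bounded above by
$$S_\infty := \sum_{j=J}^{\infty} n_{j+1}\,||\mathcal{I}_{n_{j+1}}(\mu)-\mathcal{I}_{n_{j}}(\mu)||_\infty,$$
which is finite by the \textsf{additional condition} imposed just before Lemma \ref{BoundsLemma}. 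In particular $S_i \leq S_\infty$ for every $i \geq J$.

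Next, since the exponential function is monotone increasing, substituting this inequality into the third bound of Lemma \ref{Bound1_aux} yields
\begin{align*}
|\pi_{L}\circ\hat{f}_{n_{i}}(x)| &\leq (1+A\,||\mathcal{I}_{L}(\mu)||_\infty)\,e^{\frac{A'}{L}S_i}\,\max\{1,|\pi_L(x)|\} \\
&\leq (1+A\,||\mathcal{I}_{L}(\mu)||_\infty)\,e^{\frac{A'}{L}S_\infty}\,\max\{1,|\pi_L(x)|\},
\end{align*}
which is precisely the claimed estimate. The constants $A$ and $A'$ are inherited verbatim from Lemma \ref{Bound1_aux}, so they depend only on $\mu$, $k$ and $p$ and not on $i$ or $L$.

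There is no genuine obstacle: the corollary is a direct reformulation of the lemma in a form whose right-hand side no longer depends on the index $i$. The whole point of isolating this statement is precisely the uniformity in $i$, which will be needed at step \ref{step2} of the sketch of Theorem \ref{Main} in order to extract a locally uniform limit of the sequence $\bigl(\hat{f}_{n_{i}}\bigr)_{i\in\N}$ on compact sets of $\C_\Q$. All the nontrivial work --- obtaining the exponential form and controlling the constants via the supports of $\mu_{n_i}^{\uparrow L}$ and $\mu_{n_i,n_{i-1}}^{\uparrow L}$ --- has already been performed in Lemma \ref{Bound1_aux}; here one only invokes the summability of the tail furnished by the additional condition.
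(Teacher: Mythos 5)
Your proof is correct and follows essentially the same route as the paper: the paper's own argument simply invokes the third estimate of Lemma \ref{Bound1_aux} and passes to the limit $i\to\infty$ on the right-hand side, which is exactly your observation that the nonnegative partial sums are dominated by the convergent full series and that the exponential is monotone. Your write-up is in fact slightly more explicit than the paper's two-line proof, but there is no difference in substance.
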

\begin{proof}
The convergence of the series implies the hypothesis of the previous Lemma \ref{Bound1_aux}. Taking the limit $i\to\infty$ on the right hand side of the relation gives the result.
\end{proof}

\begin{lema}\label{Bound_ML2}
Define $L=n_{J}$. There are constants $B$ and $B'$ depending only on $\mu$, $k$ and $p$ such that for every natural $i\geq J$:
$$|\pi_{L}(x)|\leq (1+ B||\mathcal{I}_{L}||_{\infty})e^{ \frac{B'}{L}\ \sum_{j=J}^{\infty}n_{j+1}\ ||\mathcal{I}_{n_{j+1}}(\mu)- \mathcal{I}_{n_{j}}(\mu)||_{\infty}}\ max\{\ 1,\ |\pi_{L}\circ\hat{f}_{n_{i}}(x)|\ \}$$
\end{lema}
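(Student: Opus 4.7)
The plan is to mirror the argument proving Lemma \ref{Bound1_aux} and Corollary \ref{Bound_ML}, substituting Lemma \ref{Cota2_prel} for Lemma \ref{Cota1_Prel}. Because of the uniform support containment $\mathrm{supp}(\mu_{n_i}^{\uparrow L}),\mathrm{supp}(\mu_{n_i,n_{i-1}}^{\uparrow L})\subset \pi_1(\mathrm{supp}(\mu))\cup\overline{D(0;1)}$ together with the common quasiconformal bound $\|\mu_{n_i,n_{i-1}}^{\uparrow L}\|_\infty<k$ secured by Lemma \ref{BoundsLemma} and the earlier relabeling, a single constant $B$ and a single exponent $p>2$ in Lemma \ref{Cota2_prel} will serve every normal solution that appears in the argument below.

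First I would establish a one-level reverse bound. Applying Lemma \ref{Cota2_prel} to $f_{n_j,n_{j-1}}^{\uparrow n_j}$ at its natural level, invoking Lemma \ref{BoundsLemma}, and setting $B':=B/(1-k^2)$ together with $a_{n_j}:=n_j\,\|\mathcal{I}_{n_j}(\mu)-\mathcal{I}_{n_{j-1}}(\mu)\|_\infty$, one obtains
$$|\pi_{n_j}(x)|\,\leq\,\bigl(1+B'a_{n_j}/n_j\bigr)\,\max\bigl\{1,|\pi_{n_j}\circ\hat f_{n_j,n_{j-1}}(x)|\bigr\}.$$
Raising to the power $n_j/L\geq 1$ and using the compatibility $|\pi_L(\cdot)|=|\pi_{n_j}(\cdot)|^{n_j/L}$, the identity $\max\{1,|\pi_{n_j}(\cdot)|\}^{n_j/L}=\max\{1,|\pi_L(\cdot)|\}$, and the elementary estimate $(1+s)^t\leq e^{st}$, yields the lifted inequality
$$|\pi_L(x)|\,\leq\, e^{B'a_{n_j}/L}\,\max\bigl\{1,|\pi_L\circ\hat f_{n_j,n_{j-1}}(x)|\bigr\}.$$
An identical application of Lemma \ref{Cota2_prel} to $f_L=f_{n_J}$, with Beltrami coefficient $\mu_L$, gives the base-step bound $|\pi_L(x)|\leq(1+B\,\|\mathcal{I}_L(\mu)\|_\infty)\max\{1,|\pi_L\circ\hat f_L(x)|\}$.

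Second, I would iterate along the telescoping decomposition $\hat f_{n_i}=\hat f_{n_i,n_{i-1}}\circ\cdots\circ\hat f_{n_{J+1},n_J}\circ\hat f_L$. Setting $y_J:=\hat f_L(x)$ and $y_k:=\hat f_{n_k}(x)$ for $k>J$, so that $y_{k+1}=\hat f_{n_{k+1},n_k}(y_k)$, the lifted one-level bound applied with $y_k$ in place of $x$ gives
$$\max\{1,|\pi_L(y_k)|\}\leq e^{B'a_{n_{k+1}}/L}\max\{1,|\pi_L(y_{k+1})|\},\qquad J\leq k\leq i-1.$$
Chaining these inequalities, combining with the base bound for $|\pi_L(x)|$ in terms of $\max\{1,|\pi_L(y_J)|\}$, and finally dominating the partial sum $\sum_{k=J}^{i-1}a_{n_{k+1}}$ by the full convergent series $\sum_{j=J}^{\infty}n_{j+1}\,\|\mathcal{I}_{n_{j+1}}(\mu)-\mathcal{I}_{n_j}(\mu)\|_\infty$ gives the claim. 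The case $i=J$ is precisely the base bound, corresponding to an empty sum.

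The main obstacle is purely bookkeeping: one must carefully exploit $n_j/L\geq 1$ when lifting from level $n_j$ to level $L$, verify that $\max\{1,\cdot\}$ commutes with that power, and thread the constant $B'=B/(1-k^2)$ inherited from Lemma \ref{BoundsLemma} through the estimates. There is no new analytic input beyond Lemmas \ref{Cota2_prel} and \ref{BoundsLemma} and the uniform quasiconformal choice of $k$; the argument is a faithful dual of the one proving Corollary \ref{Bound_ML}.
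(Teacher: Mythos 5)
Your proposal is correct and is essentially the paper's own argument: the paper simply states that the proof is ``almost verbatim'' that of Lemma \ref{Bound1_aux} with Lemma \ref{Cota2_prel} in place of Lemma \ref{Cota1_Prel}, which is exactly the dualization you carry out (one-level reverse bound, lift by the power $n_j/L$ using $(1+s)^t\leq e^{st}$, telescope along $\hat f_{n_i}=\hat f_{n_i,n_{i-1}}\circ\cdots\circ\hat f_{n_{J+1},n_J}\circ\hat f_L$, and dominate the partial sum by the full series). Your write-up in fact supplies more detail than the paper does.
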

\begin{proof}
The proof is almost verbatim to the proof of Lemma \ref{Bound1_aux} with reference to Lemma \ref{Cota2_prel} instead of \ref{Cota1_Prel}.
\end{proof}

\begin{lema}\label{TheFormula}
Define $L=n_{J}$. There is a constant $A'$ depending only on $\mu$, $k$ and $p$ and a constant $M_{L}\geq 1$ such that for every $i\geq J$:
$$| \pi_{L}\circ \hat{f}_{n_{i+1}}(x) - \pi_{L}\circ \hat{f}_{n_{i}}(x)| \leq \frac{A'}{L}\ n_{i+1}\ ||\mathcal{I}_{n_{i+1}}(\mu)-\mathcal{I}_{n_{i}}(\mu)||_{\infty}\ M_{L}\ max\{\ 1,\ |\pi_{L}(x)|\ \}$$
\end{lema}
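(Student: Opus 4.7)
The plan is to exploit the cocycle $\hat{f}_{n_{i+1}}=\hat{f}_{n_{i+1},n_i}\circ\hat{f}_{n_i}$, so that setting $y=\hat{f}_{n_i}(x)$ the quantity to estimate becomes $|\pi_L(\hat{f}_{n_{i+1},n_i}(y))-\pi_L(y)|$. Since $L=n_J$ and $i\geq J$ force $L\mid n_i\mid n_{i+1}$, the map $\hat{f}_{n_{i+1},n_i}$ descends at its natural level $n_{i+1}$ to $f_{n_{i+1},n_i}$, while the coarser projection factors as $\pi_L=(\cdot)^{n_{i+1}/L}\circ\pi_{n_{i+1}}$. Writing $u=\pi_{n_{i+1}}(y)$, $v=f_{n_{i+1},n_i}(u)$ and $m=n_{i+1}/L\in\N$, the problem reduces to estimating $|v^m-u^m|$.

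From the polynomial identity $v^m-u^m=(v-u)\sum_{k=0}^{m-1}v^ku^{m-1-k}$ I obtain $|v^m-u^m|\leq m\,|v-u|\max(|u|,|v|)^{m-1}$. Applying Lemma \ref{Cota1_Prel} to the normal solution $f_{n_{i+1},n_i}$, combined with Lemma \ref{BoundsLemma}(2) and the uniform bound $\|\mu_{n_{i+1},n_i}\|_\infty<k$ (which also fixes a single exponent $p>2$ valid for all these maps), yields
$$|v-u|\leq C\,\delta_i\,|u|^{1-2/p},\qquad C:=A/(1-k^2),\qquad \delta_i:=\|\mathcal{I}_{n_{i+1}}(\mu)-\mathcal{I}_{n_i}(\mu)\|_\infty.$$
A short case split according to whether $|u|\geq 1$ or $|u|<1$ then gives $\max(|u|,|v|)\leq(1+C\delta_i)\max(1,|u|)$, and substituting back with $|u|^m=|\pi_L(y)|$ produces
$$|v^m-u^m|\leq\frac{n_{i+1}}{L}\,C\,\delta_i\,(1+C\delta_i)^{m-1}\,\max(1,|\pi_L(y)|).$$

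The delicate point is taming the factor $(1+C\delta_i)^{m-1}$, which a priori blows up with $m=n_{i+1}/L$. Here the hypothesis $\sum_j n_{j+1}\delta_j<\infty$ is essential: summability forces $K:=\sup_j n_{j+1}\delta_j<\infty$, whence $(1+C\delta_i)^{m-1}\leq e^{C\delta_i(n_{i+1}/L-1)}\leq e^{CK/L}$, a constant depending only on $\mu$, $k$, $p$ and $L$. Finally, Corollary \ref{Bound_ML} applied to $y=\hat{f}_{n_i}(x)$ bounds $\max(1,|\pi_L(y)|)$ by a further constant of the same kind times $\max(1,|\pi_L(x)|)$. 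Taking $A':=C$ and letting $M_L$ absorb both $e^{CK/L}$ and the Corollary \ref{Bound_ML} constant yields the stated inequality. The uniform control of $(1+C\delta_i)^{m-1}$ is the only genuine obstacle; without the summability hypothesis the argument collapses, in line with the obstruction exhibited by Lemma \ref{counterexample}.
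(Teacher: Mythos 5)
Your argument is correct and its skeleton is the same as the paper's: the cocycle decomposition $\hat{f}_{n_{i+1}}=\hat{f}_{n_{i+1},n_i}\circ\hat{f}_{n_i}$, descent to level $n_{i+1}$ where Lemma \ref{Cota1_Prel} combined with Lemma \ref{BoundsLemma} gives $|v-u|\leq A'\,\delta_i\,|u|^{1-2/p}$ with $A'=A/(1-k^2)$, and Corollary \ref{Bound_ML} to pass from $\hat{f}_{n_i}(x)$ back to $x$. You differ in two technical steps, both legitimate. First, you estimate $|v^m-u^m|$ via the algebraic factorization $(v-u)\sum_{k=0}^{m-1}v^ku^{m-1-k}$ rather than the paper's ``Lagrange Theorem'' applied to $w\mapsto w^{n'}$ along a complex segment; this is arguably cleaner (the mean value theorem for complex-valued maps is only valid in the inequality form the paper implicitly uses) and yields the same factor $m\max(|u|,|v|)^{m-1}|v-u|$. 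Second, and more substantively, the potentially explosive power $\max(|u|,|v|)^{m-1}$ is tamed differently: the paper rewrites it at level $L$ as $\max\{|\pi_{L}\circ\hat{f}_{n_i}(x)|,|\pi_{L}\circ\hat{f}_{n_{i+1}}(x)|\}^{1-1/n'}$, whose exponent is at most one, so Corollary \ref{Bound_ML} alone closes the estimate; you instead keep the factor $(1+A'\delta_i)^{m-1}$ and control it by $e^{A'K/L}$ using $K=\sup_j n_{j+1}\delta_j<\infty$, an extra (but free) consequence of the standing summability hypothesis. Since that hypothesis is assumed throughout, your constant $M_L$ merely absorbs one more $L$-dependent factor and the conclusion, with $A'$ depending only on $\mu$, $k$ and $p$, is identical. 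The only point left tacit is that a single constant $A$ in Lemma \ref{Cota1_Prel} serves all the maps $f_{n_{i+1},n_i}$; this follows, as in the proof of Lemma \ref{Bound1_aux}, from the uniform bound on the supports of the coefficients $\mu_{n_{i+1},n_i}^{\uparrow L}$.
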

\begin{proof}
We take the same values of $k<1$, $p>2$ and constants $A$ and $A'= A/(1-k^{2})$ as those in the proof of Lemma \ref{Bound1_aux}. Denote $n=n_{i}$ and $m=n_{i-1}$. By Lemma \ref{Cota1_Prel} and relation \eqref{ineq1} we have:
\begin{equation}\label{des_no_renorm}
|f_{n,m}(\pi_{n}(x))- \pi_{n}(x)|\leq A' \parallel \mathcal{I}_{n}(\mu)- \mathcal{I}_{m}(\mu) \parallel_{\infty} \mid \pi_{n}(x) \mid ^{1-2/p}
\end{equation}
where $A'= A/(1-k^{2})$ and $\parallel \mu \parallel_{\infty} = k$. Define $n'=n/L$. Lagrange Theorem implies:
\begin{eqnarray}\label{Lagrange}
\nonumber | \pi_{L}\circ \hat{f}_{n,m}(x) - \pi_{L}(x)| &=& | f_{n,m}(\pi_{n}(x))^{n'} - \pi_{L}(x)| \\
&\leq& n'\ |\xi|^{n'-1}\ |f_{n,m}(\pi_{n}(x)) - \pi_{n}(x)|
\end{eqnarray}

\noi for some $\xi$ in the interior of the segment joining $\pi_{n}\circ \hat{f}_{n,m}(x)$ and $\pi_{n}(x)$. In particular,
\begin{eqnarray}\label{bound}
\nonumber |\xi|^{n'-1} &\leq&  max\{\ |\pi_{n}(x)|,\ |\pi_{n}\circ \hat{f}_{n,m}(x)|\ \}^{n'-1} \\
&=&  max\{\ |\pi_{L}(x)|,\ |\pi_{L}\circ \hat{f}_{n,m}(x)|\ \}^{1-1/n'}
\end{eqnarray}

\noi Equations \eqref{des_no_renorm}, \eqref{Lagrange} and \eqref{bound} imply:
\begin{eqnarray}
\nonumber && | \pi_{L}\circ \hat{f}_{n,m}(x) - \pi_{L}(x)| \leq \frac{A'}{L}\ n\ ||\mathcal{I}_{n}(\mu)-\mathcal{I}_{m}(\mu)||_{\infty}\ldots \\
&& \ldots max\{\ |\pi_{L}(x)|,\ |\pi_{L}\circ \hat{f}_{n,m}(x)|\ \}^{1-1/n'} \mid \pi_{L}(x) \mid ^{(1-2/p)1/n'}
\end{eqnarray}

\noi In particular, because $\hat{f}_{n}= \hat{f}_{n,m}\circ \hat{f}_{m}$ we have:
\begin{eqnarray}\label{des_renorm}
\nonumber && | \pi_{L}\circ \hat{f}_{n}(x) - \pi_{L}\circ \hat{f}_{m}(x)| \leq \frac{A'}{L}\ n\ ||\mathcal{I}_{n}(\mu)-\mathcal{I}_{m}(\mu)||_{\infty} \ldots \\
&& \ldots max\{\ |\pi_{L}\circ \hat{f}_{m}(x)|,\ |\pi_{L}\circ \hat{f}_{n}(x)|\ \}^{1-1/n'} \mid \pi_{L}\circ \hat{f}_{m}(x) \mid ^{(1-2/p)1/n'}
\end{eqnarray}

\noi By the previous corollary \ref{Bound_ML} there is a constant $M_{L}$ such that:
\begin{equation}
|\pi_{L}\circ\hat{f}_{n_{i}}(x)| \leq M_{L}\ max\{\ 1,\ |\pi_{L}(x)|\ \}
\end{equation}
for every $i\geq J$ where $L=n_{J}$. This bound implies:
\begin{eqnarray}
\nonumber && | \pi_{L}\circ \hat{f}_{n_{i+1}}(x) - \pi_{L}\circ \hat{f}_{n_{i}}(x)| \\
\nonumber && \leq \frac{A'}{L}\ n_{i+1}\ ||\mathcal{I}_{n_{i+1}}(\mu)-\mathcal{I}_{n_{i}}(\mu)||_{\infty}\ (M_{L}\ max\{\ 1,\ |\pi_{L}(x)|\ \})^{1-2/pn'} \\
&& \leq \frac{A'}{L}\ n_{i+1}\ ||\mathcal{I}_{n_{i+1}}(\mu)-\mathcal{I}_{n_{i}}(\mu)||_{\infty}\ M_{L}\ max\{\ 1,\ |\pi_{L}(x)|\ \}
\end{eqnarray}
where we have used that $M_{L}\ max\{\ 1,\ |\pi_{L}(x)|\ \} \geq 1$ and the formula is proved.
\end{proof}

\begin{lema}\label{existence_continuous_map}
There is a continuous leaf preserving map $\hat{f}$ from the adelic Riemann sphere to itself fixing the cusps $0$ and $\infty$ such that $(\hat{f}_{n_{i}})_{i\in \N}$ converges pointwise to $\hat{f}$.
\end{lema}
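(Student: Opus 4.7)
The plan is to execute steps 2 and 3 of the author's sketch: at each level $L$ of the inverse system, produce a locally uniform limit $g_L = \lim_i \pi_L \circ \hat{f}_{n_i}$ using Lemma \ref{TheFormula}; then assemble the compatible family $(g_L)_L$ into a single continuous map by the universal property of the inverse limit; then extend continuously to the two cusps using the two-sided bounds.

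For the first step I will fix $L = n_J$ in the cofinal sequence and a compact subset $K \subset \C^*_\Q$. Since $\pi_L(K) \subset \C^*$ is compact, there is $R > 0$ with $|\pi_L(x)| \leq R$ on $K$. Lemma \ref{TheFormula} then gives, uniformly in $x \in K$ and for every $i \geq J$,
$$|\pi_L \circ \hat{f}_{n_{i+1}}(x) - \pi_L \circ \hat{f}_{n_i}(x)| \leq \frac{A' M_L \max\{1, R\}}{L}\, n_{i+1}\, \|\mathcal{I}_{n_{i+1}}(\mu) - \mathcal{I}_{n_i}(\mu)\|_\infty.$$
The additional summability hypothesis makes this the $i$-th term of a convergent series, so $(\pi_L \circ \hat{f}_{n_i})_{i \geq J}$ is uniformly Cauchy on $K$ and thus converges locally uniformly on $\C^*_\Q$ to a continuous map $g_L: \C^*_\Q \to \C$.

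For the second step, the identity $z^{L'/L} \circ \pi_{L'} = \pi_L$ applied to $\hat{f}_{n_i}$ and passed to the limit gives the compatibility $z^{L'/L} \circ g_{L'} = g_L$ whenever $L \mid L'$. By the universal property of $\C P^{1}_\Q = \varprojlim (\C P^{1}, z \mapsto z^{L'/L})$, the family $(g_L)_L$ yields a unique continuous map $\hat{f}: \C^*_\Q \to \C P^{1}_\Q$ with $\pi_L \circ \hat{f} = g_L$, leaf preserving because the approximants are. Corollary \ref{Bound_ML} and Lemma \ref{Bound_ML2} passed to the limit produce constants $c_L > 0$ satisfying
$$c_L^{-1} \max\{1, |\pi_L(x)|\} \leq \max\{1, |g_L(x)|\} \leq c_L \max\{1, |\pi_L(x)|\}.$$
The right inequality implies $|g_L(x)| \to \infty$ as $|\pi_L(x)| \to \infty$, so $\hat{f}$ extends continuously to the cusp $\infty$ by $\hat{f}(\infty) = \infty$. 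For the cusp $0$, each $\hat{f}_{n_i}$ fixes it because $f_{n_i}(0) = 0$, and the analogous estimate near $0$ (obtained from Lemmas \ref{Cota1_Prel} and \ref{Cota2_prel} after inversion of the variable) forces $\hat{f}(0) = 0$. Pointwise convergence on $\C^*_\Q$ then reduces to the locally uniform convergence at each level, and at the two cusps every $\hat{f}_{n_i}$ already agrees with $\hat{f}$.

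The hard part will be the two-sided comparison controlling behaviour near the cusp $\infty$ and, more delicately, the matching behaviour near the cusp $0$: once those are in place the rest is the formal inverse limit construction. The technical estimates already proved, Lemma \ref{TheFormula} together with Corollary \ref{Bound_ML} and Lemma \ref{Bound_ML2}, supply exactly what is needed for this step, and the summability hypothesis on $\mu$ is what turns the level-wise Cauchy estimate into a convergent series independent of $x$ on each compact set.
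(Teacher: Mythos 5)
Your proposal is correct and follows essentially the same route as the paper: Lemma \ref{TheFormula} plus the summability hypothesis gives the uniform Cauchy property on compacta, the universal property of the inverse limit assembles the $g_L$ into $\hat{f}$, and Lemma \ref{Bound_ML2} supplies the properness needed to extend continuously by $\hat{f}(\infty)=\infty$. Two small corrections: it is the \emph{left} inequality in your two-sided bound (the lower bound on $|g_L|$ coming from Lemma \ref{Bound_ML2}) that forces $|g_L(x)|\to\infty$ as $|\pi_L(x)|\to\infty$, not the right one; and no separate estimate or inversion is needed at the cusp $0$, since the normal solutions already fix $0$ and the Cauchy estimate of Lemma \ref{TheFormula} is uniform on compact neighbourhoods of $0$, where $\max\{1,|\pi_L(x)|\}=1$.
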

\begin{proof}
\textit{Existence of $\hat{f}$:} For each $L=n_{J}$, Lemma \ref{TheFormula} implies that $(\pi_{L}\circ\hat{f}_{n_{i}})_{i\in\N}$ is a uniform Cauchy sequence on compact sets so there is a continuous function $g_{L}:\C_{\Q}\rightarrow \C$ such that the sequence $(\pi_{L}\circ\hat{f}_{n_{i}})_{i\in\N}$ converges uniformly to $g_{L}$ on compact sets. Consider another $L'= n_{J'}$ such that $J'>J$. Because $z^{L'/L}\circ \pi_{L'}\circ\hat{f}_{n_{i}}= \pi_{L}\circ\hat{f}_{n_{i}}$ for every $i\geq J'$ and the continuity of $z^{L'/L}$ we have that $z^{L'/L}\circ g_{L'}= g_{L}$. By the universal property of inverse limits there is a unique function $\hat{f}:\C_{\Q}\rightarrow \C_{\Q}$ such that $\pi_{n_{i}}\circ \hat{f}= g_{n_{i}}$ for every natural $i$. Because every $g_{n_{i}}$ is continuous we have that $\hat{f}$ is continuous and verifies that $(\pi_{L}\circ\hat{f}_{n_{i}})_{i\geq J}$ converges uniformly to $\pi_{L}\circ\hat{f}$ on compact sets. In particular, $(\hat{f}_{n_{i}})_{i\in \N}$ converges pointwise to $\hat{f}$. By definition every $\hat{f}_{n_{i}}$ is leaf preserving hence so is $\hat{f}$.

\textit{The map $\hat{f}$ is proper:} Consider a compact set $K\subset \hat{\C}_{\Q}$. The compact $K$ is closed for every compact subset of a Hausdorff space is closed and because $\hat{f}$ is continuous, $\hat{f}^{-1}(K)$ is closed. By Lemma \ref{Bound_ML2} and the fact that $(\hat{f}_{n_{i}})_{i\in \N}$ converges pointwise to $\hat{f}$, we have that for every $L=n_{J}$ there is a constant $M_{L}'$ such that:
$$|\pi_{L}(x)|\leq M_{L}'|\pi_{L}\circ\hat{f}(x)|$$
Choose some natural $L=n_{J}$. Define $R$ such that $d(0,\pi_{L}(K))=R<\infty$ for $\pi_{L}$ is continuous; i.e. $\pi_{L}(K)$ is compact. By the above relation we have that $$d(0,\pi_{L}(\hat{f}^{-1}(K)))\leq M_{L}'R$$ and because $\pi_{L}$ is proper, the closed set $\hat{f}^{-1}(K)$ is contained in the compact $\pi_{L}^{-1}(D(0;M_{L}'R))$ hence $\hat{f}^{-1}(K)$ is compact and we have the claim.

\textit{Extension:} In particular, the extension $\hat{f}:\C P^{1}_\Q\rightarrow \C P^{1}_\Q$ such that $\hat{f}(\infty)= \infty$ is continuous and because $\hat{f}_{n_{i}}(\infty)= \infty$ for every natural $i$, we have that $(\hat{f}_{n_{i}})_{i\in \N}$ converges pointwise to $\hat{f}$ on $\C P^{1}_\Q$. This finishes the proof.
\end{proof}

\bigskip

\noi\textbf{Proof of Theorem \ref{Main}:}

\noi\textit{(Uniqueness)} Suppose that $f$ and $g$ are solenoidal quasiconformal solutions to the solenoidal Beltrami equation associated to $\mu$ fixing $0,1,\infty$. Then, $f\circ g^{-1}$ is a leaf preserving $1$-quasiconformal map fixing $0,1,\infty$. There is a holomorphic limit periodic respect to $x$ function $h$ such that $\nu^{-1}\circ f\circ g^{-1}\circ \nu(z)= z+h(z)$ where $\nu$ is the baseleaf. On the other hand, by Weyl's Lemma $\nu^{-1}\circ f\circ g^{-1}\circ \nu$ is a holomorphic homeomorphism of $\C$; i.e. an affine transformation. Because it fixes zero, we have that $\nu^{-1}\circ f\circ g^{-1}\circ \nu=id$ hence $f\circ g^{-1}=id$ and $f=g$.

\noi\textit{(Existence)} First suppose that $\mu$ has compact support in $\C_{\Q}$. Consider an arbitrary leaf $\nu:\C\rightarrow \C^{*}_{\Q}\subset\C P^{1}_\Q$. By Lemma \ref{existence_continuous_map} there is a continuous leaf preserving map $\hat{f}:\C P^{1}_\Q\rightarrow \C P^{1}_\Q$ such that $(\hat{f}_{n_{i}})_{i\in \N}$ converges pointwise to $\hat{f}$. In particular, the sequence $(\nu^{-1}\circ\hat{f}_{n_{i}}\circ\nu)_{i\in\N}$ converges pointwise to $\nu^{-1}\circ\hat{f}\circ\nu$. By the analytic definition of quasiconformal maps (section 4.1.2 in \cite{Imayoshi}), because $\pi_n\circ \nu= \exp(2\pi i n z)$ is a holomprphic map, the maps $\nu^{-1}\circ\hat{f}_{n_{i}}\circ\nu$ are quasiconformal solutions of the respectives $\nu^{*}(\mathcal{I}_{n_{i}}(\mu))$-Beltrami equations.

To use the standard convergence Theorem for quasiconformal maps, we need to change the normalization: Define the affine maps $A_{i}(z)= a_{i}z+b_{i}$ such that $A_{i}^{-1}\circ\nu^{-1}\circ\hat{f}_{n_{i}}\circ\nu$ is the quasiconformal solution of the $\nu^{*}(\mathcal{I}_{n_{i}}(\mu))$-Beltrami equation fixing $0,1,\infty$ (See remark \ref{TradeOff} below). Concretely:
$$a_{i}= \nu^{-1}\circ\hat{f}_{n_{i}}\circ\nu(1)- \nu^{-1}\circ\hat{f}_{n_{i}}\circ\nu(0)$$
$$b_{i}= \nu^{-1}\circ\hat{f}_{n_{i}}\circ\nu(0)$$
Because $(\hat{f}_{n_{i}})_{i\in \N}$ converges pointwise to $\hat{f}$, the sequence of affine maps $(A_{i})_{i\in\N}$ converges locally uniformly to the map $A(z)= az+b$ such that:
$$a = \nu^{-1}\circ\hat{f} \circ\nu(1)- \nu^{-1}\circ\hat{f}\circ\nu(0)$$
$$b = \nu^{-1}\circ\hat{f} \circ\nu(0)$$
A priori $a$ could be zero. Define the map $g$ as the quasiconformal solution of the $\nu^{*}(\mu)$-Beltrami equation fixing $0,1,\infty$. Because $\mathcal{I}_{n_{i}}(\mu)$ tends to $\mu$ in $L_{\infty}(\C_{\Q})$ we have that $\nu^{*}(\mathcal{I}_{n_{i}}(\mu))$ tends to $\nu^{*}(\mu)$ in $L_{\infty}(\C)$ and by Lemma \ref{Imayoshi_Taniguchi_Convergence_infty} we conclude that:
$$A_{i}^{-1}\circ\nu^{-1}\circ\hat{f}_{n_{i}}\circ\nu\xrightarrow{i\to\infty} g$$
locally uniformly. Then:
$$\nu^{-1}\circ\hat{f}_{n_{i}}\circ\nu\xrightarrow{i\to\infty} A\circ g$$
and we conclude that:
$$\nu^{-1}\circ\hat{f} \circ\nu= A\circ g$$
Because $\hat{f}$ is continuous and fixes $0,\infty$ it cannot be constant. In particular $a\neq 0$ and we have that $\nu^{-1}\circ\hat{f} \circ\nu$ is a quasiconformal solution of the $\nu^{*}(\mu)$-Beltrami equation for every leaf $\nu$. Finally, $\hat{f}$ is a homeomorphism for every continuous bijective map between compact sets is a homeomorphism. We have proved that $\hat{f}$ is quasiconformal homeomorphism. Multiplying by $\hat{f}(1)^{-1}$ we have the quasiconformal solution fixing $0,1,\infty$.

Now we remove the hypothesis of the compact support of $\mu$ by the standard well known trick: Define $\mu_{1}= \mu.\chi_{|\pi_{1}(z)|\geq 1}$ and consider the M\"obius inversion $\gamma:\C P^{1}_\Q\rightarrow \C P^{1}_\Q$ such that $\gamma(z)= z^{-1}$. Because $\gamma^{*}(\mu_{1})$ has compact support on $\C_{\Q}$, by the previous part there is a unique quasiconformal leaf preserving solution $g:\C P^{1}_\Q\rightarrow \C P^{1}_\Q$ to the $\gamma^{*}(\mu_{1})$-Beltrami equation such that $g$ fixes $0,1,\infty$. Define $f_{1}$ such that the following diagram commutes: 
$$\xymatrix{ 	\C P^{1}_\Q \ar[rr]^{g} \ar[d]_{\gamma} & & \C P^{1}_\Q \ar[d]^{\gamma} \\
			\C P^{1}_\Q \ar[rr]^{f_{1}}  & & \C P^{1}_\Q  }$$
			
\noi \textit{Claim:} The map $f_{1}$ is the quasiconformal solution of the $\mu_{1}$-Beltrami equation fixing $0,1,\infty$: Because $\gamma$ and $g$ are homeomorphisms fixing $0,1,\infty$ so is $f_{1}$. For every leaf $\nu_{a}$ we have the diagram:

$$\xymatrix{
\C \ar[rrrr]^(.3){\nu_{-a}^{-1}\circ g\circ\nu_{-a}} \ar[dd]^(.7){-z} \ar@{^{(}->}[rrd]^{\nu_{-a}}	&	& &		& \C \ar'[d][dd] \ar@{}[dd]^(.7){-z} \ar@{^{(}->}[rrd]^{\nu_{-a}}	&	&	\\
		& 		& \C P^{1}_\Q \ar[rrrr]^(.3){g} \ar[dd]^(.3){\gamma}	&	& &			& \C P^{1}_\Q \ar[dd]^(.3){\gamma}	\\
    \C \ar|{\ \ }[rrrr] \ar@{^{(}->}[rrd]^{\nu_{a}}	\ar@{}[rrrr]^(.3){\nu_{a}^{-1}\circ f_{1}\circ\nu_{a}}	&	& &		& \C \ar@{^{(}->}[rrd]^{\nu_{a}}			&	&	\\
		&		& \C P^{1}_\Q  \ar[rrrr]^(.3){f_{1}}		&	& &			& \C P^{1}_\Q		\\}$$

\noi Because every $\nu_{a}$ is injective and the left, right, top, bottom and front sides commute we have that the back face also commutes. By definition $\nu_{-a}^{-1}\circ g\circ\nu_{-a}$ is a quasiconformal solution of the $\nu_{-a}^{*}\circ\gamma^{*}(\mu_{1})$-Beltrami equation so $\nu_{a}^{-1}\circ f_{1}\circ\nu_{a}$ is a quasiconformal solution of the $(-z)^{*}\circ\nu_{-a}^{*}\circ\gamma^{*}(\mu_{1})$-Beltrami equation. We have:
$$(-z)^{*}\circ\nu_{-a}^{*}\circ\gamma^{*}(\mu_{1})= (\gamma\circ\nu_{-a}\circ (-z))^{*}(\mu_{1})= \nu_{a}^{*}(\mu)$$
and this proves the claim.

Define the adelic differential $\mu_{2}$ such that:
$$f_{1}^{*}(\mu_{2})= \frac{\mu-\mu_{1}}{1-\mu\overline{\mu_{1}}}\ \overline{d\pi_{1}}\otimes (d\pi_{1})^{-1}$$
On the right hand side we denote by $\mu$ and $\mu_1$ the $L_\infty$ classes, the coefficients, and not the differentials. We have:

$$\nu_{a}^{*}(\mu) = (\mu\circ\nu_{a})\ \frac{\overline{(\pi_{1}\circ \nu_{a})'}} {(\pi_{1}\circ \nu_{a})'} = (\mu\circ\nu_{a})\ \frac{\overline{(e^{iz})'}}{(e^{iz})'} = -e^{-i(z+\bar{z})}(\mu\circ\nu_{a})= \mu_{a}$$
and a similar expression and definition for $\nu_{a}^{*}(\mu_{1})$:
$$\nu_{a}^{*}(\mu_{1})= -e^{-i(z+\bar{z})}(\mu_{1}\circ\nu_{a}) = \mu_{1,a}$$
\noi A similar calculation gives:
$$(f_{1}\circ\nu_{a})^{*}(\mu_{2}) = \nu_{a}^{*}(f_{1}^{*}(\mu_{2}))= -e^{-i(z+\bar{z})}\left(\frac{\mu-\mu_{1}}{1-\mu\overline{\mu_{1}}}\right)\circ\nu_{a} = \frac{\mu_{a}-\mu_{1,a}}{1-\mu_{a}\overline{\mu_{1,a}}}$$

\noi Because $\mu_{2}$ has compact support on $\C_{\Q}$ there is a unique quasiconformal leaf preserving solution $f_{2}$ to the $\mu_{2}$-Beltrami equation fixing $0,1,\infty$. Define the map $f=f_{2}\circ f_{1}$. It is clearly quasiconformal leaf preserving and fixes $0,1,\infty$ for it is the composition of maps of the same kind. Because:
$$\nu_{a}^{-1}\circ f\circ\nu_{a}= \nu_{a}^{-1}\circ f_{2}\circ f_{1}\circ\nu_{a}= (\nu_{a}^{-1}\circ f_{2}\circ\nu_{a})\circ (\nu_{a}^{-1}\circ f_{1}\circ\nu_{a})$$
and the following fact:
$$(\nu_{a}^{-1}\circ f_{1}\circ\nu_{a})^{*}(\nu_{a}^{*}(\mu_{2}))= (f_{1}\circ\nu_{a})^{*}(\mu_{2})= \frac{\mu_{a}-\mu_{1,a}}{1-\mu_{a}\overline{\mu_{1,a}}}$$
we conclude that $\nu_{a}^{*}(\mu)= \mu_{a}$ is the Beltrami differential of $\nu_{a}^{-1}\circ f\circ\nu_{a}$ for every leaf $\nu_{a}$; i.e. $f$ is the unique quasiconformal solution to the $\mu$-Beltrami equation fixing $0,1,\infty$.
\fdem

\begin{obs}\label{TradeOff}
At first sight it seems there is something terribly wrong in the above proof: While $\hat{f}$ fixes $0,\infty$ and has only one degree of freedom as a solution of the $\mu$-Beltrami equation, its conjugated map $\nu^{-1}\circ\hat{f}\circ\nu$ has two degrees of freedom. Why the conjugated map has an extra degree of freedom? Let's see: The conjugated map has the same freedom as $\hat{f}$ plus the property of being uniformly limit periodic on horizontal bands. Once this last property is destroyed by an affine transformation, an extra degree of freedom comes out.
\end{obs}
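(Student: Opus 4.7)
Since the remark is a parameter-counting observation rather than a formal theorem, my plan is to make precise each of its three assertions: the descent characterization of $F := \nu^{-1}\circ\hat{f}\circ\nu$, the identification of the affine subgroup preserving that characterization, and the way the scaling parameter reappears when this subgroup is broken. First I would spell out the structural form. Since $\hat{f}$ is leaf preserving and descends to $\C^{*}_{\Q}$, the basepoint lift $F$ has the form $F(z) = z + g(z)$ with $g$ a continuous function which is limit periodic in the real variable $x$, uniformly on every horizontal strip $\{|\mathrm{Im}\,z|\leq R\}$; conversely, any quasiconformal homeomorphism of $\C$ of this precise form descends to a leaf preserving self-map of the solenoid fixing the two cusps. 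This establishes the bijective correspondence between $\hat{f}$ and its baseleaf avatar $F$.

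The key computation is the action of affine normalizations on this structural form. Two normal solutions of the same Beltrami equation on $\C$ differ by post-composition with a conformal automorphism of $\C P^{1}$ fixing $\infty$, i.e.\ an affine map $A(z) = az + b$ with $a\neq 0$. For $A\circ F$ to still have the form $z + (\mathrm{limit\ periodic})$ I would compute
$$A(F(z)) - z = (a-1)z + a g(z) + b,$$
and observe that the linear term $(a-1)z$ is unbounded on horizontal strips unless $a = 1$, so it can never be absorbed into a limit periodic function. Hence the stabilizer of the descent condition inside the affine normalization group is exactly the translation subgroup $\{z\mapsto z+b\}$, and imposing the structural form kills the entire complex scaling parameter $a$.

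The final step is to match degrees of freedom on both sides. On the adelic side, once $0$ and $\infty$ are fixed (which is topologically forced by the ramification), the only residual freedom of $\hat{f}$ is the monodromy action of $\hat{\Z}$ on the fiber over $1$, and this is killed by $\hat{f}(1) = 1$; this is the ``one degree of freedom'' mentioned in the remark. On the $\C$ side, after the affine renormalization $A_{i}$ in the proof is applied, the structural form $z + (\mathrm{limit\ periodic})$ is broken, the scaling parameter $a$ is liberated, and the parameter count of the conjugated picture jumps from one to two. Thus the apparent paradox is resolved: the extra ``degree of freedom'' of the conjugated map is not genuine extra freedom of the quasiconformal problem but rather the affine scaling parameter that was previously silenced by the limit periodicity constraint, and which the auxiliary affine transformations $A_{i}$ explicitly reintroduce in order to apply the classical convergence theorem. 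The only real obstacle in writing this up is expository, namely making sure the reader understands that ``the conjugated map'' refers to the specific descending lift $F$ and not to an arbitrary solution of the conjugated Beltrami equation.
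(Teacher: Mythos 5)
Your proposal is correct and formalizes exactly the reasoning the paper's remark itself sketches: the baseleaf representative has the structural form $z+g(z)$ with $g$ limit periodic on horizontal strips, the computation $A(F(z))-z=(a-1)z+ag(z)+b$ shows the unbounded term forces $a=1$, so the limit-periodicity constraint silences the scaling parameter that the affine renormalizations $A_i$ later reintroduce. The only slight imprecision is your description of the residual freedom of $\hat{f}$ as the $\hat{\Z}$-monodromy on the fiber over $1$; it is in fact post-composition with multiplication by an arbitrary element of $\C^{*}_{\Q}$ (which under $\exp$ is precisely the translation parameter $b$), but this does not affect the parameter count or the resolution of the apparent paradox.
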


\end{document}